\DeclareMathOperator*{\EE}{\mathbb{E}}
\newcommand{\CC}{\mathbb{C}}
\newcommand{\NN}{\mathbb{N}}
\newcommand{\ZZ}{\mathbb{Z}}
\newcommand{\FF}{\mathbb{F}}
\newcommand{\1}{1}
\newcommand{\C}{\mathcal{C}}
\newcommand{\U}{\mathcal{U}}
\renewcommand{\P}{\mathcal{P}}
\newcommand{\h}{\mathfrak{h}}
\newcommand{\x}{{\rm{\textbf{x}}}}
\renewcommand{\v}{{\rm{\textbf{v}}}}
\renewcommand{\k}{\underline{k}}
\newcommand{\kk}{\underline{k}}
\renewcommand{\h}{{\underline{h}}}
\newcommand{\hh}{{\underline{h}}}
\newcommand{\yy}{{\underline{y}}}
\newcommand{\y}{{\underline{y}}}
\newcommand{\ww}{{\underline{w}}}
\newcommand{\w}{{\underline{w}}}
\newcommand{\Span}{{\rm{Span}}}
\renewcommand{\leq}{\leqslant}
\renewcommand{\geq}{\geqslant}
\renewcommand{\epsilon}{\varepsilon}
\theoremstyle{theorem}
\newtheorem{theorem}{Theorem}[section]
\newtheorem{proposition}[theorem]{Proposition}
\newtheorem{corollary}[theorem]{Corollary}
\newtheorem{lemma}[theorem]{Lemma}
\theoremstyle{definition}
\title[Multidimensional polynomial Szemer\'{e}di theorem in finite fields]{Multidimensional polynomial Szemer\'{e}di theorem in finite fields for polynomials of distinct degrees}
\author{Borys Kuca}
\date{}
\begin{document}

\maketitle

\begin{abstract}
    We obtain a polynomial upper bound in the finite-field version of the multidimensional polynomial Szemer\'{e}di theorem for distinct-degree polynomials. That is, if $P_1, ..., P_t$ are nonconstant integer polynomials of distinct degrees and $\v_1, ..., \v_t$ are nonzero vectors in $\FF_p^D$, we show that each subset of $\FF_p^D$ lacking a nontrivial configuration of the form
    $$\x, \x + \v_1 P_1(y), ..., \x + \v_t P_t(y)$$
    has at most $O(p^{D-c})$ elements. In doing so, we apply the notion of Gowers norms along a vector adapted from ergodic theory, which extends the classical concept of Gowers norms on finite abelian groups.
\end{abstract}

\section{Introduction}
We prove the following bound in the finite field version of the multidimensional polynomial Szemer\'{e}di theorem of Bergelson and Leibman \cite{bergelson_leibman_1996}.
\begin{theorem}\label{bounds}
Let $D, t\in\NN_+$, $\v_1, ..., \v_t\in\ZZ^D$ be nonzero vectors and $P_1, ..., P_t\in\ZZ[y]$ be polynomials satisfying $0 < \deg P_1 < ... < \deg P_t$. There exist constants $c,C>0$ and a threshold $p_0\in\NN$ such that for all primes $p>p_0$, each subset $A\subseteq \FF_p^D$ of size at least $C p^{D-c}$ contains
\begin{align}\label{configuration}
    \x,\; \x + \v_1 P_1(y), \; ...,\; \x + \v_t P_t(y)
\end{align}
for some $\x\in\FF_p^D$ and nonzero $y\in\FF_p$.
\end{theorem}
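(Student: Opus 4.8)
The plan is to adapt the Peluse--Prendiville method of \emph{PET induction followed by degree lowering}, carried out with the directional Gowers norms $U^s_\v$ in place of the ordinary Gowers norms on $\FF_p^D$, and then to close an outer induction on the number $t$ of polynomial terms. Set, for $1$-bounded $f_0,\dots,f_t\colon\FF_p^D\to\CC$,
\[
\Lambda(f_0,\dots,f_t)=\EE_{\x\in\FF_p^D}\ \EE_{y\in\FF_p}\ f_0(\x)\prod_{i=1}^{t}f_i(\x+\v_iP_i(y)),
\]
and write $1_A=\alpha+g$ with $\alpha=|A|/p^D$, $\EE g=0$. Expanding $\Lambda(1_A,\dots,1_A)$ over the $2^{t+1}$ choices of $\alpha$ or $g$ in each slot, the all-$\alpha$ term is $\alpha^{t+1}$, the term carrying $g$ only in slot $0$ vanishes, and every other term carries $g$ in some slot $i\geq1$. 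The $y=0$ part of $\Lambda$ contributes $O(\alpha/p)$, and comparing this with $\alpha^{t+1}$ is what forces the exponent $c$ in the theorem to be small (roughly $c<1/t$). Hence, if $A$ has no nontrivial configuration, then for some term of the expansion --- isolated by a generalized von Neumann inequality --- one gets $\|1_A-\alpha\|_{U^{s}_{\v_j}}\gg\alpha^{O(1)}$ for some $j\in\{1,\dots,t\}$ and some $s=s(t,\deg P_1,\dots,\deg P_t)$.

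The first task is the generalized von Neumann inequality itself, proved by the polynomial exhaustion technique (PET): one applies the van der Corput inequality repeatedly in the variable $y$, each time differencing the system of polynomials by a fresh shift, until a single function survives inside a genuine Gowers cube. Because the degrees $\deg P_i$ are pairwise distinct, the PET scheme terminates and $s$ stays bounded; distinct degrees also rule out the resonances between the $P_i$ that would otherwise obstruct the weight reduction. Crucially, every van der Corput shift attached to the surviving function $f_j$ lies in the line $\FF_p\v_j$, which is exactly why the relevant object is the \emph{directional} norm $U^s_{\v_j}$ rather than the full Gowers norm on $\FF_p^D$.

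The technical heart is the degree-lowering step: control of a given term of $\Lambda$ by $\|\cdot\|_{U^{s}_{\v_j}}$ is bootstrapped down to control by $\|\cdot\|_{U^{1}_{\v_j}}$. The mechanism is the familiar one --- if the slot-$j$ function has large $U^{s}_{\v_j}$ norm, then for many scalar shifts $h$ the derivative $\Delta_{h\v_j}$ of that function has large $U^{s-1}_{\v_j}$ norm; substituting this back into the $\Lambda$-estimate and combining Cauchy--Schwarz with a further PET argument that exploits the polynomial identities among the $P_i$ shows the associated ``phase'' is essentially $h$-independent, which lets one lower $s$ by one, and one iterates down to $s=1$. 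Executing this for directional norms attached to the fixed vectors $\v_j$ --- keeping every shift confined to the correct line, and controlling all error terms to within $O(p^{-c})$ --- is the step I expect to be the main obstacle.

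Finally, once $\|1_A-\alpha\|_{U^{1}_{\v_j}}\gg\alpha^{O(1)}$, the pushforward $g_0$ of $1_A$ to the quotient $Q_j=\FF_p^D/\FF_p\v_j$ is far from constant in $L^2$. Decomposing $1_A=g_0\circ\pi_j+(1_A-g_0\circ\pi_j)$ along the $\v_j$-lines (the second summand has mean zero on each such line) and substituting into the $j$-th slot of the configuration count, the term carrying the mean-zero piece is $U^{s}_{\v_j}$-controlled with vanishing $U^{1}_{\v_j}$ norm, hence $O(p^{-c})$ by the previous step; the remaining term involves the shorter, still distinct-degree system obtained by deleting the $j$-th polynomial, now weighted by a bounded function built from $g_0$. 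Pushing this down to $Q_j$ and, where the interaction of the remaining directions forces it, iterating the same dichotomy (there are only $t$ directions, so it terminates), one reduces to a configuration with strictly fewer polynomial terms, to which the induction hypothesis --- stated for bounded weights, not just indicators --- applies; the base case $t=1$ is elementary. Tracking the polynomial losses incurred in the $O_{t,D}(1)$ reductions shows that a sufficiently small $c=c(t,\deg P_1,\dots,\deg P_t)$ and a sufficiently large $p_0$ make the whole argument go through.
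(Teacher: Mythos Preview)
Your engine---PET induction to obtain directional $U^s(\v_j)$ control, then degree lowering to $U^1(\v_j)$---matches the paper. One point of imprecision: degree lowering is not performed on the slot-$j$ function itself (an arbitrary bounded function can have large $U^s$ norm and tiny $U^{s-1}$ norm); the paper instead passes to the \emph{dual} function $F_{m,t}$, whose structure as an average over polynomial progressions is what makes the phase analysis and the inductive appeal to the shorter configuration go through. Your phrase ``substituting this back into the $\Lambda$-estimate'' suggests you have the right picture, but the sentence ``if the slot-$j$ function has large $U^s_{\v_j}$ norm then its derivatives\dots'' misdescribes the mechanism.

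The genuine gap is the endgame. Your opening decomposition $1_A=\alpha+g$ does not close the argument: after degree lowering, each error term is controlled by $\|g\|_{U^1(\v_j)}=\|\EE(1_A|V_j)-\alpha\|_2$, and this quantity can be of order $1$ (take $A$ concentrated on a few $V_j$-cosets), so the error terms are \emph{not} $O(p^{-c})$. You implicitly acknowledge this by switching in the last paragraph to the decomposition $1_A=\EE(1_A|V_j)+(1_A-\EE(1_A|V_j))$, whose second piece genuinely has vanishing $U^1(\v_j)$ norm. Iterating this substitution over $j=t,t-1,\dots,1$ does lead to the asymptotic
\[
\Lambda(1_A,\dots,1_A)=\EE_\x\, 1_A(\x)\prod_{i=1}^t\EE(1_A|V_i)(\x)+O(p^{-c}),
\]
which is the paper's Theorem~\ref{count}. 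But your induction on $t$ does not supply a lower bound for this main term: it is no longer $\alpha^{t+1}$ by construction (that was the point of the $\alpha+g$ split, now abandoned), and your reduction to a shorter system with a bounded weight in slot~$0$ simply reproduces the same product of conditional expectations. The paper closes the argument with a separate convexity inequality due to Chu (Lemma~\ref{bounding the count from below}): for nonnegative $f$ and $V_0=\{0\}$,
\[
\EE_\x\prod_{i=0}^t\EE(f|V_i)(\x)\geq\bigl(\EE_\x f(\x)\bigr)^{t+1}.
\]
This step is short but is not a consequence of the PET/degree-lowering machinery or of your outer induction, and without it the argument does not force a nontrivial configuration.
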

A special case of this statement is that each subset of $\FF_p^2$ of size $\Omega(p^{2-c})$ contains a nontrivial configuration of the form 
\begin{align}\label{square corners}
    (x_1, x_2), (x_1 + y, x_2), (x_1, x_2 + y^2),
\end{align}
previously proved in \cite{han_lacey_yang_2021},
or a novel result that each subset of $\FF_p^3$ of size $\Omega(p^{3-c})$ contains a nontrivial configuration of the form 
\begin{align}\label{cubic corners}
    (x_1, x_2, x_3), (x_1 + y, x_2, x_3), (x_1, x_2 + y^2, x_3), (x_1, x_2, x_3 + y^3). 
\end{align}
A configuration (\ref{configuration}), (\ref{square corners}) or (\ref{cubic corners}) is nontrivial if $y\neq 0$.

Theorem \ref{bounds} follows from the following result and its corollary. 
\begin{theorem}\label{count}
Let $D, t\in\NN_+$, $\v_1, ..., \v_t\in\ZZ^D$ be nonzero vectors and $P_1, ..., P_t\in\ZZ[y]$ be polynomials satisfying $0<\deg P_1 < ... < \deg P_t$. There exists $c>0$ and a threshold $p_0\in\NN$ such that for all primes $p>p_0$ and all 1-bounded functions $f_0, ..., f_t:\FF_p^D\to\CC$, we have
\begin{align*}
    \EE_{\substack{\x\in\FF_p^D,\\ y\in\FF_p}}f_0(\x)\prod_{i=1}^t f_i(\x+ \v_i P_i(y)) = \EE_{\x\in\FF_p^D}f_0(\x)\prod_{i=1}^t \EE_{n_i\in\FF_p}f_i(\x + \v_i n_i) + O(p^{-c}).
\end{align*}
\end{theorem}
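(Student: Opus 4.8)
The plan is to combine a generalized von Neumann theorem, sharpened by a degree-lowering argument, with an induction on $t$. Throughout, for each $i$ write $g_i:=\EE_{n\in\FF_p}f_i(\cdot+\v_i n)$; this is $1$-bounded, invariant under translation by $\v_i$, and the difference $f_i-g_i$ has vanishing first Gowers norm along $\v_i$ in the sense made precise below. The directional Gowers norms in question are, for $\v\in\FF_p^D\setminus\{0\}$, $s\geq 1$ and $f:\FF_p^D\to\CC$,
$$\|f\|_{U^s_{\v}}^{2^s}\;:=\;\EE_{\x\in\FF_p^D}\,\EE_{h_1,\dots,h_s\in\FF_p}\ \prod_{\omega\in\{0,1\}^s}\C^{|\omega|}f\bigl(\x+(\omega_1h_1+\dots+\omega_sh_s)\v\bigr),$$
where $\C$ denotes complex conjugation. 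I would first record their basic properties: a Cauchy--Schwarz--Gowers inequality, the nesting $\|f\|_{U^1_{\v}}\leq\|f\|_{U^2_{\v}}\leq\cdots$, the identity $\|f\|_{U^1_{\v}}=\|\EE_{n}f(\cdot+\v n)\|_{L^2}$ (so that $\|f_i-g_i\|_{U^1_{\v_i}}=0$), and an inverse theorem which, after restricting $f$ to the cosets of the line $\FF_p\v$, reduces to the one-dimensional inverse theorem for the Gowers $U^s$ norm on $\FF_p$; only the case $s=2$, i.e.\ Fourier analysis on a line, will be needed downstream.

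The analytic core is the estimate that, for all $1$-bounded $f_0,\dots,f_t$ and every $j\in\{1,\dots,t\}$ (only one such index will be used below),
$$\Bigl|\,\EE_{\x\in\FF_p^D,\,y\in\FF_p}f_0(\x)\prod_{i=1}^t f_i(\x+\v_iP_i(y))\,\Bigr|\;\ll\;\|f_j\|_{U^1_{\v_j}}^{\kappa}+p^{-\kappa}$$
for some $\kappa=\kappa(D,t,\deg P_1,\dots,\deg P_t)>0$. I would prove it in two stages. First, the PET induction of Bergelson, realized through repeated van der Corput inequalities in $y$ together with Cauchy--Schwarz in $\x$, reduces the left-hand side to control by $\|f_j\|_{U^{s}_{\v_j}}$ for some finite $s$; here the hypothesis $\deg P_1<\cdots<\deg P_t$ is precisely what keeps the polynomial systems produced at each step non-degenerate (the difference polynomials $P_i(y+h)-P_{i'}(y)$, and their iterates, retain their expected degrees in both the old and the newly introduced variables), so that the induction terminates with $f_j$ isolated. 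Second, a degree-lowering argument in the spirit of the method of Peluse and Prendiville improves $U^{s}_{\v_j}$-control to $U^{s-1}_{\v_j}$-control and, iterated, all the way down to $U^1_{\v_j}$-control: one expands a single layer of the $U^{s}_{\v_j}$ norm, applies Cauchy--Schwarz to introduce an extra shift parameter $h$, and invokes the $U^2$ inverse theorem on lines to produce a dual function that correlates on each line with a linear phase whose frequency depends on $h$; the distinct-degree structure forces that frequency to vary with $h$ like a polynomial of positive degree, so a Weil bound for the attached exponential sum over $\FF_p$ shows the frequency is in fact essentially independent of $h$, which collapses the $U^{s}_{\v_j}$ quantity to a $U^{s-1}_{\v_j}$ one.

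Granting this estimate, Theorem~\ref{count} follows by induction on $t$. Writing $f_1=(f_1-g_1)+g_1$ and applying the estimate (with $j=1$) to the $2$-bounded tuple $(f_0,f_1-g_1,f_2,\dots,f_t)$, together with $\|f_1-g_1\|_{U^1_{\v_1}}=0$ and the $\v_1$-invariance $g_1(\x+\v_1P_1(y))=g_1(\x)$, yields
$$\EE_{\x,y}f_0(\x)\prod_{i=1}^t f_i(\x+\v_iP_i(y))\;=\;\EE_{\x,y}\,f_0(\x)g_1(\x)\prod_{i=2}^t f_i(\x+\v_iP_i(y))\;+\;O(p^{-\kappa}).$$
The main term on the right is a length-$(t-1)$ count with the distinct-degree polynomials $P_2,\dots,P_t$ and the $1$-bounded weight $f_0g_1$ in place of $f_0$, so the inductive hypothesis rewrites it as $\EE_{\x}(f_0g_1)\prod_{i=2}^t g_i+O(p^{-\kappa})=\EE_{\x}f_0\prod_{i=1}^t g_i+O(p^{-\kappa})$, which is the claim. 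The base case $t=1$ is handled directly, and serves as a prototype for the degree-lowering conclusion: since $\EE_{\x,y}f_0(\x)f_1(\x+\v_1P_1(y))-\EE_{\x}f_0(\x)g_1(\x)=\EE_{\x,y}f_0(\x)(f_1-g_1)(\x+\v_1P_1(y))$, Cauchy--Schwarz in $\x$ and a change of variables bound the square of the right-hand side by $\EE_{y,y'}\EE_{\x}(f_1-g_1)(\x+\v_1(P_1(y)-P_1(y')))\,\overline{(f_1-g_1)(\x)}$; expanding in Fourier coefficients on $\FF_p^D$, only frequencies $\xi$ with $\v_1\cdot\xi\neq 0$ contribute (the remaining ones annihilate $f_1-g_1$), and for those the inner factor equals $|\EE_{y}e_p((\v_1\cdot\xi)P_1(y))|^2\ll p^{-1}$ by the Weil bound, $P_1$ being nonconstant, where $e_p(x)=e^{2\pi ix/p}$.

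The step I expect to be the main obstacle is the degree-lowering stage. It requires carrying the PET bookkeeping far enough to pin down the exponent $s$ and the precise shape of the derived polynomial systems, and then, at each iteration, correctly identifying the polynomial in $h$ that governs the dual frequency and verifying --- this is where $\deg P_1<\cdots<\deg P_t$ is used once more --- that it has positive degree, so that the Weil estimate applies and the induction on the Gowers order proceeds. A secondary and more routine layer of work is setting up the theory of the directional norms $U^s_{\v}$ (the Cauchy--Schwarz--Gowers inequality, the nesting, and a localized inverse theorem) so that it meshes cleanly with both the output of the PET induction and the one-dimensional Fourier and Weyl-sum input on the line $\FF_p\v$.
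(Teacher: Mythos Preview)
Your overall strategy --- establish $U^1_{\v_j}$ control of the multilinear form, then decompose $f_j=g_j+(f_j-g_j)$ and use $\|f_j-g_j\|_{U^1_{\v_j}}=0$ --- is correct and matches the paper's endgame. Two gaps, however, lie in your route to the $U^1$ control.

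First, your assertion that PET yields $\|f_j\|_{U^s_{\v_j}}$ control for \emph{every} $j$ is false when $D>1$. Van der Corput differencing naturally isolates only the function $f_t$ attached to the highest-degree polynomial, giving $\|f_t\|_{U^s_{\v_t}}$ control along $\v_t$ and nothing directly along the other directions; the paper flags this explicitly in its outline and encodes it in the ``nice family'' hypothesis of its Proposition~4.1. Your induction should therefore peel off $f_t$, not $f_1$. This is easily repaired.

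Second, and this is the substantive gap, your degree-lowering sketch does not close. The lowering from $U^s$ to $U^{s-1}$ is performed on a dual function $F$, and after the $U^2$-inverse step one obtains a frequency function $\phi(\x;\h)$ that is an \emph{arbitrary} function of the differencing parameters $\h$ --- not a polynomial in $\h$ --- so there is no Weil bound to invoke. What actually forces $\phi$ to be essentially constant in $\h$ on a large set is control of a \emph{shorter} multilinear form that now carries additional exponential twists of the shape $e_p(\psi_i(\x)P_i(y))$, arising from the Fourier expansion. Your inductive hypothesis, namely Theorem~\ref{count} for length $t-1$, does not cover such phase-twisted forms, so the loop cannot close. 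The paper resolves this by proving a strengthened statement (its Proposition~5.1) by induction on the number $m$ of genuine function factors, carrying $t-m$ residual phase factors $e_p(\phi_i(\x)P_i(y))$ throughout; this phase-augmented hypothesis is precisely what the degree-lowering step consumes at each stage. You will need to formulate and carry an analogous generalization through your induction.
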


\begin{corollary}\label{count for nonnegative functions}
Let $D, t\in\NN_+$, $\v_1, ..., \v_t\in\ZZ^D$ be nonzero vectors and $P_1, ..., P_t\in\ZZ[y]$ be polynomials satisfying $0<\deg P_1 < ... < \deg P_t$. There exists $c>0$ and a threshold $p_0\in\NN$ such that for all primes $p>p_0$ and all nonnegative 1-bounded functions $f:\FF_p^D\to\CC$, we have
\begin{align*}
    \EE_{\substack{\x\in\FF_p^D,\\ y\in\FF_p}}f(\x)\prod_{i=1}^t f(\x+ \v_i P_i(y)) \geq \left(\EE_{\x\in\FF_p^D} f(\x)\right)^{t+1}  + O(p^{-c}).
\end{align*}
\end{corollary}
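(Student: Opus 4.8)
The plan is to deduce the corollary from Theorem~\ref{count} and then isolate the single inequality that does the work. Applying Theorem~\ref{count} with $f_0 = f_1 = \dots = f_t = f$ turns the left-hand side of the corollary into $\EE_{\x\in\FF_p^D} f(\x)\prod_{i=1}^t M_i f(\x) + O(p^{-c})$, where I write $M_i f(\x) := \EE_{n\in\FF_p} f(\x + \v_i n)$. So the whole statement reduces to the error-free inequality
\[
  \EE_{\x\in\FF_p^D} f(\x)\prod_{i=1}^t M_i f(\x)\ \geq\ \Big(\EE_{\x\in\FF_p^D} f(\x)\Big)^{t+1},
\]
the $O(p^{-c})$ being inherited from Theorem~\ref{count}. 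Here $M_i$ is the conditional expectation onto functions invariant under translation by the order-$p$ subgroup $W_i := \FF_p \v_i \leq \FF_p^D$: it is self-adjoint, mean-preserving, maps nonnegative functions to nonnegative functions, and $M_i f$ is constant on each coset $\bar\x$ of $W_i$, equal there to $\EE_{\x\in\bar\x} f(\x)$. In particular $\EE_{\bar\x} M_i f(\bar\x) = \EE_{\x} f(\x)$, and $f(\x) > 0$ forces $M_i f(\x) > 0$.

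To prove the displayed inequality, set $\alpha := \EE_\x f(\x)$ and assume $\alpha > 0$ (otherwise $f\equiv 0$ and both sides vanish). The key step is to pass to the $f$-weighted probability measure $\rho := (f/\alpha)\,\mu$, with $\mu$ the uniform measure on $\FF_p^D$, and observe that $\EE_\mu[f\prod_i M_i f] = \EE_\rho[\alpha\prod_i M_i f]$. Since $f(\x)>0$ forces every $M_i f(\x)>0$, the integrand $\alpha\prod_i M_i f$ is $\rho$-almost everywhere positive, so Jensen's inequality for the concave function $\log$ yields
\[
  \EE_\rho\Big[\alpha\prod_{i=1}^t M_i f\Big]\ \geq\ \exp\!\Big(\log\alpha + \sum_{i=1}^t \EE_\rho \log M_i f\Big).
\]
It remains to show $\EE_\rho \log M_i f \geq \log\alpha$ for each $i$. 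Because $M_i f$ is constant on $W_i$-cosets, unfolding $\rho=(f/\alpha)\mu$ and averaging first over each coset gives
\[
  \EE_\rho \log M_i f\ =\ \frac1\alpha\,\EE_{\bar\x \in \FF_p^D/W_i}\big[ M_i f(\bar\x)\,\log M_i f(\bar\x)\big]\ \geq\ \frac1\alpha\cdot \alpha\log\alpha\ =\ \log\alpha,
\]
the inequality being Jensen for the convex function $t\mapsto t\log t$ together with $\EE_{\bar\x} M_i f(\bar\x) = \alpha$ (using the convention $0\log 0 = 0$; cosets on which $M_i f$ vanishes carry no $\rho$-mass). Combining the two displays gives $\EE_\mu[f\prod_i M_i f]\geq \exp((t+1)\log\alpha)=\alpha^{t+1}$, as needed.

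The part I expect to be least obvious — and essentially the only obstacle — is extracting the \emph{sharp} exponent $t+1$. A naive argument that peels off the factors $M_i f$ one at a time by Cauchy--Schwarz loses a factor of $2$ in the exponent at each step, giving only $\alpha^{2^t}$, and the positivity of the covariances one would want in an inductive "$C_t \geq \alpha\, C_{t-1}$" is genuinely false. The entropy-flavoured computation above — Jensen for $\log$ after tilting by $f/\alpha$, then Jensen for $t\log t$ on each quotient $\FF_p^D/W_i$ — sidesteps this and produces $\alpha^{t+1}$ directly; note it uses nothing about the $\v_i$ beyond being nonzero and nothing about $f$ beyond nonnegativity, the $1$-boundedness being needed only to keep the left-hand side bounded and to invoke Theorem~\ref{count}.
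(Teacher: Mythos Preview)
Your proof is correct and follows the same route as the paper: apply Theorem~\ref{count} with $f_0=\cdots=f_t=f$ to reduce to the inequality $\EE_\x f(\x)\prod_{i=1}^t \EE(f|V_i)(\x) \geq (\EE_\x f)^{t+1}$, and then invoke that inequality. The paper records this inequality as Lemma~\ref{bounding the count from below} and simply cites it as a special case of Lemma~1.6 of \cite{chu_2011}; your two-step Jensen argument (tilt by $f/\alpha$ and apply $\log$, then apply $t\log t$ on each quotient $\FF_p^D/W_i$) supplies a clean self-contained proof of it.
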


A careful analysis of the proofs of Theorems \ref{bounds} and \ref{count} reveals that the bound in Theorem \ref{bounds} and the error terms in Theorem \ref{count} and Corollary \ref{count for nonnegative functions} can be chosen uniformly for all the nonzero vectors $\v_1, ..., \v_t$. Similarly, these quantities do not depend on the specific form of the polynomials $P_1, ..., P_t$, only on their degrees. The threshold $p_0$ in both theorems does however depend on the vectors and the polynomials. We also remark that both results hold for $\FF_q$ with $q$ being a prime power, provided that the characteristic of $\FF_q$ is sufficiently large in terms of the vectors $\v_1, ..., \v_t$ and the polynomials $P_1, ..., P_t$.

As an example, Theorem \ref{count} implies that
\begin{align*}
    &\EE_{x_1, x_2, y\in\FF_p}f_0(x_1, x_2) f_1(x_1 + y, x_2) f_2(x_1, x_2+y^2)\\
    &= \EE_{\substack{x_1, x_2,\\ x_1', x_2'\in\FF_p}} f_0(x_1, x_2) f_1(x_1', x_2) f_2(x_1, x_2') + O(p^{-c})
\end{align*}
for some $c>0$ uniformly in all 1-bounded functions $f_0, f_1, f_2: \FF_p^2\to\CC$. This particular statement has been proved in \cite{han_lacey_yang_2021} with an explicit constant $c = \frac{1}{8}$, but its natural analogue for (\ref{cubic corners}) is novel:
\begin{align}\label{count for 4-corners}
    \nonumber &\EE_{\substack{x_1, x_2, x_3,\\ y\in\FF_p}}f_0(x_1, x_2, x_3) f_1(x_1 + y, x_2, x_3) f_2(x_1, x_2+y^2, x_3)f_3(x_1, x_2, x_3 + y^3)\\ 
    &= \EE_{\substack{x_1, x_2, x_3,\\ x_1', x_2', x_3'\in\FF_p}} f_0(x_1, x_2, x_3) f_1(x_1', x_2, x_3) f_2(x_1, x_2', x_3) f_3(x_1, x_2, x_3') + O(p^{-c}).
\end{align}

It then follows from Corollary \ref{count for nonnegative functions} that if $A\subseteq\FF_p^3$ has size $|A|=\alpha p^3$ for $\alpha \gg p^{-c/4}$, then
\begin{align*}
    \left|\left\{\left((x_1, x_2, x_3), (x_1 + y, x_2, x_3), (x_1, x_2+y^2, x_3), (x_1, x_2, x_3 + y^3)\right)\in A^3\right\}\right|\leq \alpha^4 p^{12}.
\end{align*}

Theorems \ref{bounds} and \ref{count} extend results from \cite{han_lacey_yang_2021}, which proves the same statements in the special case $t=2$, $D=2$, $\v_1=(1,0)$ and $\v_2 = (0,1)$, i.e. for configurations of the form
\begin{align}
    (x_1, x_2), (x_1 + P_1(y), x_2), (x_1, x_2 + P_2(y)).
\end{align}
Theorems \ref{bounds} and \ref{count} also generalise results from the one-dimensional $(D=1)$ case \cite{bourgain_chang_2017, dong_li_sawin_2017, peluse_2018, peluse_2019, kuca_2020a}. Some of the results in the abovementioned papers also have integer analogues \cite{shkredov_2006a, shkredov_2006b, sarkozy_1978a, sarkozy_1978b, balog_pelikan_pintz_szemeredi_1994, slijepcevic_2003, lucier_2006, rice_2019, bloom_maynard_2020, prendiville_2017, peluse_prendiville_2019, peluse_prendiville_2020, peluse_2020}. In our paper, we develop multidimensional analogues of techniques pioneered in \cite{peluse_2019} and later used in \cite{peluse_2020, peluse_prendiville_2019, peluse_prendiville_2020, kuca_2020a}, and we use the version of the PET induction scheme from \cite{chu_frantzikinakis_host_2011}.

\subsection*{Notation}
Throughout the paper, we fix $D\in\NN_+$. We write elements of $\FF_p^D$ as $\x = (x_1, ..., x_D)$ and elements of $\FF_p$ as $x$. 

For a set $X$, we let $\EE_{x\in X} = \frac{1}{|X|}\sum_{x\in X}$ denote the average over $X$. If $X = \FF_p^D$ or $\FF_p$, then we suppress the mentioning of the set and let $\EE_\x = \EE_{\x\in\FF_p^D}$ and $\EE_x = \EE_{x\in\FF_p}$. Given a vector $\v_i\in\FF_p^D$, we denote $V_i = \Span_{\FF_p}\{\v_i\}$, and we define $\EE(f|V_i)(\x) = \EE_{\x + V_i}f = \EE_{y}f(\x + \v_i y)$ to be the average of $f$ along the coset $\x+V_i$. We also set $\v_0 = \textbf{0}$ and $P_0 = 0$, and we call a function $f:\FF_p^D\to\CC$ \emph{1-bounded} if $\|f\|_{\infty}=\max_\x|f(\x)|\leq 1$. Finally, we set $\|f\|_s = \left(\EE_\x |f(\x)|^s\right)^\frac{1}{s}$ for $1\leq s < \infty$.

We begin with specifying further pieces of notation used in this paper. For $k\in\ZZ$ and $P\in\ZZ[y]$, we set $\partial_k P(y) = P(y)-P(y+k)$, and for $f:\FF_p^D\to\CC$, we let $\Delta_\v f(\x) = f(\x)\overline{f(\x+\v)}$. We also let $\C z = \overline{z}$ be the conjugation operator. For $\underline{w}\in\{0,1\}^s$, we set $|w|=w_1 + ... + w_s$. Finally, the letter $p$ denotes a (sufficiently large) prime, and we let $e_p(x) = e^{2\pi i x/p}$.

We use the asymptotic notation in the standard way. If $I\subseteq\NN$ and $f,g:\NN\to\CC$, with $g$ taking positive real values, we denote $f=O(g)$, $f\ll g$, $g\gg f$ or $g = \Omega(f)$ if there exists $C>0$ such that $|f(n)|\leq C g(n)$ for all $n\in I$. If the constant $C$ depends on a paramter, we record this dependence with a subscript. All constants are allowed to depend on $D, t$, the polynomials $P_1, ..., P_t$ or the vectors $\v_1, ..., \v_t$, and this dependence is not recorded. An exception to this rule are results in Section \ref{section on PET}, where we specify all the parameters that constants depend on. 

\subsection*{Acknowledgments}
We would like to thank Sean Prendiville for useful conversations and comments on an earlier version of this paper, Donald Robertson for consultations on this project, and an anonymous referee for their detailed suggestions. 

\section{Gowers norms along a vector}
To prove Theorem \ref{count}, we need a notion of Gowers norms along a vector. Let $f:\FF_p^D\to\CC$, $s\in\NN_+$ and $\v\in\ZZ^D$. We define the Gowers norm of $f$ of degree $s$ along $\v$ to be
\begin{align*}
    \|f\|_{U^s(\v)}=\left(\EE_{\x, h_1, ..., h_s}\prod_{w\in\{0,1\}^s}\C^{|w|}f(\x + \v(w_1 h_1 + ... + w_s h_s))\right)^\frac{1}{2^s}.
\end{align*}
These norms are finitary analogues of Host-Kra seminorms from ergodic theory, corresponding to the transformation $T\x = \x + \v$ on $\FF_p^D$. If $D=1$, then the norm $U^s(\v)$ equals the 1-dimensional Gowers norm $U^s$ for any nonzero vector $\v$. If $D=2$ and $\v = (1,0)$, we have
\begin{align*}
    \|f\|_{U^2(\v)}=\left(\EE_{\substack{x_1, x_2,\\ h_1, h_2}} f(x_1, x_2)\overline{f(x_1+h_1, x_2) f(x_1 + h_2, x_2)} f(x_1+h_1 + h_2, x_2) \right)^{\frac{1}{4}}.
\end{align*}

Gowers norms along a vector satisfy a lot of the usual properties of Gowers norms. Letting $f_\x(n) = (\x + \v n)$, we can relate Gowers norms along a vector to 1-dimensional Gowers norms via the formula
\begin{align}\label{relation between Gowers norms}
    \|f\|_{U^s(\v)} = \left(\EE_\x \|f_\x\|_{U^s}^{2^s}\right)^\frac{1}{2^s}.
\end{align}
The $U^1(\v)$ norm is in fact a seminorm, and it is given by
\begin{align*}
    \|f\|_{U^1(\v)}^2 &= \EE_{\x, h}f(\x)\overline{f(\x + \v h)} = \EE_{\x, h, k}f(\x + \v k)\overline{f(\x + \v h)}\\ 
    &= \EE_{\x}\left|\EE_{h}f(\x + \v h)\right|^2 = \EE_{\x}\left|\EE_{\x + V} f\right|^2 = \|\EE(f|V)\|_{2}^2.
\end{align*}
Having large $U^1(\v)$ norm thus tells us that $f$ has large average on many cosets of $V = \Span_{\FF_p}\{\v\}$. In particular, for $D = 2$ and $\v = (1,0)$, we have
\begin{align*}
    \|f\|_{U^1(\v)}^2 = \EE_{x_2}\left|\EE_{x_1} f(x_1, x_2)\right|^2.
\end{align*}

The identity $\|f\|_{U^1(\v)} = \|\EE(f|V)\|_{L^{2}}$ can be extended to higher values of $s$ as follows: if $f$ is $V$-measurable in the sense of being constant on cosets of $V$, then $\|f\|_{U^s(\v)} = \|\EE(f|V)\|_{L^{2^s}}$. 

For $s\geq 2$, the seminorm $U^s(\v)$ is a norm and satisfies the usual monotonicity property
\begin{align*}
    \|f\|_{U^1(\v)}\leq \|f\|_{U^2(\v)}\leq \|f\|_{U^3(\v)}\leq ...
\end{align*}
Both of these properties can be derived from the formula (\ref{relation between Gowers norms}) and the corresponding properties for 1-dimensional Gowers norms. It is also straightforward to deduce from the definition that $U^s(\v)$ satisfies the induction property
\begin{align*}
    \|f\|_{U^s(\v)} = \left(\EE_{h_{k+1}, ..., h_s}\|\Delta_{\v h_{k+1}, ..., \v h_s} f\|_{U^k(\v)}^{2^k}\right)^\frac{1}{2^s}. 
\end{align*}

We need a better understanding of the $U^2(\v)$ norm. This norm can be related to Fourier analysis as follows. For $\x, \v\in\FF_p^D$ and $k\in\FF_p$, we define the Fourier transform of $f$ along $\v$ as
\begin{align*}
    \widehat{f}(\x; \v; k) = \EE_{n}f(\x + \v n) e_p(-k n),
\end{align*}
so that 
\begin{align}\label{inverse formula for Fourier transform}
    f(\x + \v n) = \sum_{k\in\FF_p}\widehat{f}(\x; \v; k) e_p(k n)
\end{align}
for any $n\in\FF_p$. There is an ambiguity involved in the formula (\ref{inverse formula for Fourier transform}), coming from expressing the value of $f$ at $\x+\v n$ in terms of the value of $\hat{f}$ at $\x$. One can however check that (\ref{inverse formula for Fourier transform}) works if we replace $\x$ and $\x + \v n$ by any $\x'$ and $\x'+\v n'$. 

In particular, letting $D = 2$ and $\v = (1,0)$, we get
\begin{align*}
    \widehat{f}(\x; \v; k) = \EE_{n}f(x_1 + n, x_2) e_p(-k n),
\end{align*}
We observe that $|\widehat{f}(\x; \v; k)|=|\widehat{f}(\x'; \v; k)|$ whenever $\x-\x'\in V$. 
With these definitions, we have
\begin{align}\label{U^2 inverse}
    \| f\|_{U^2(\v)}^4 = \EE_{\x, k}|\widehat{f}(\x;\v; k)|^4\leq \EE_{\x} |\widehat{f}(\x; \v; \phi(\x))|^2
\end{align}
whenever $f$ is 1-bounded, where $\phi(\x)$ is an element of $\FF_p$ for which $\max_{k} |\widehat{f}(\x; \v; k)| = |\widehat{f}(\x; \v; \phi(\x))|$. 

The Fourier transform can also be used to give an alternative description of the $U^1(\v)$ norm; specifically, $\|f\|^2_{U^1(\v)} = \EE_\x|\widehat{f}(\x; \v; 0)|^2$ since $\widehat{f}(\x; \v; 0) = \EE(f|V)(\x)$.

We also need the following variant of the classical exponential sums estimates, which can be found e.g. in \cite{kowalski_2018} as Theorem 3.2.
\begin{lemma}\label{Weyl sums}
Let $P\in\ZZ[y]$ be a polynomial with $\deg P = d$ satisfying $1\leq d < p$. Then
\begin{align*}
    \left|\EE_{y} e_p(P(y))\right| \leq (d-1) p^{-1/2}.
\end{align*}
\end{lemma}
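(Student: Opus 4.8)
The plan is to recognise this as a special case of the Weil bound for complete exponential sums over $\FF_p$ and to obtain it from the Riemann Hypothesis for curves over finite fields. First I would pass to the complete sum $S(P) = \sum_{y\in\FF_p} e_p(P(y))$: since $\EE_y e_p(P(y)) = p^{-1}S(P)$, the claim is equivalent to $|S(P)| \leq (d-1)p^{1/2}$. The case $d=1$ is immediate, since then $P(y) = ay+b$ with $a\not\equiv 0 \pmod p$ (as $\deg P = 1 < p$), so $S(P) = e_p(b)\sum_{y}e_p(ay) = 0$, which matches the right-hand side $0\cdot p^{1/2}$. Hence from now on $2\leq d < p$.

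For $d\geq 2$ the crucial point is that $d<p$ forces $\gcd(d,p)=1$, so $P$ is not of the form $Q(y)^p - Q(y) + c$ with $Q\in\FF_p[y]$, $c\in\FF_p$; equivalently, the Artin--Schreier cover $z^p - z = P(y)$ of the affine $y$-line is absolutely irreducible and $e_p$ is a nontrivial character of its deck group $\FF_p$. I would then introduce, for $n\geq 1$, the extended sums $S_n = \sum_{y\in\FF_{p^n}} e_p\bigl(\mathrm{Tr}_{\FF_{p^n}/\FF_p}P(y)\bigr)$, so that $S_1 = S(P)$, and form the $L$-function $L(T) = \exp\bigl(\sum_{n\geq 1} S_n T^n/n\bigr)$. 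Rationality of $L$, together with the standard conductor computation for the Artin--Schreier sheaf attached to $(P, e_p)$ — which uses $\gcd(d,p)=1$ to pin down the Swan conductor at infinity as $d$ — shows that $L(T) = \prod_{i=1}^{d-1}(1-\alpha_i T)$ is a polynomial of degree exactly $d-1$, and hence $S_n = -\sum_{i=1}^{d-1}\alpha_i^n$ for all $n$.

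The final ingredient is Weil's theorem, i.e. the Riemann Hypothesis for the associated curve, which gives $|\alpha_i| = p^{1/2}$ for every $i$; taking $n=1$ yields $|S(P)| = \bigl|\sum_{i=1}^{d-1}\alpha_i\bigr| \leq (d-1)p^{1/2}$, and dividing by $p$ finishes the argument. The main obstacle is precisely this Riemann Hypothesis input, which is genuinely deep and which I would not attempt to reprove; in the write-up I would simply quote it in its packaged form, as is done via the citation to \cite{kowalski_2018}. A fully elementary alternative avoiding algebraic geometry is Stepanov's polynomial method, which yields a bound of comparable strength by exhibiting an auxiliary polynomial forced to vanish at all $y$ with $P(y)$ in a fixed coset; this is self-contained but much longer, so for the present paper invoking the Weil bound is the natural route.
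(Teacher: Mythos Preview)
Your sketch is correct and is precisely the standard proof of the Weil bound for additive character sums. Note, however, that the paper does not give its own proof of this lemma at all: it simply quotes the statement as Theorem~3.2 of \cite{kowalski_2018}. So there is nothing to compare against; your outline is exactly the argument underlying the cited reference, and your remark that one would ``simply quote it in its packaged form'' is in fact what the paper does.
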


\section{The outline of the argument}
We fix integers $0\leq m\leq t$, nonzero vectors $\v_1, ..., \v_t\in\ZZ^D$, and polynomials $P_1, ..., P_t\in\ZZ[y]$ satisfying $0< \deg P_1 < \deg P_2 < ... < \deg P_t$.

Roughly speaking, our proof of Theorem \ref{bounds} goes by induction on $t$, and it follows the three-step strategy of \cite{peluse_2019}. Like in \cite{peluse_2019}, we start by obtaining a global Gowers norm control on the operator
\begin{align}\label{operator}
    \EE_{\x, y}\prod_{i=0}^t f_i(\x+ \v_i P_i(y)). 
\end{align}
We then perform a degree-lowering argument to show that we can in fact control this operator by a $U^1$-type norm. Finally we use the properties of this norm to show that 
\begin{align*}
    \left|\EE_{\x, y}\prod_{i=0}^t f_i(\x+ \v_i P_i(y)) - \EE_{\x}\prod_{i=0}^t \EE(f_i|V_i)(\x)\right| \ll p^{-c}
\end{align*}
for some $c>0$. 

One difference between our argument and that of \cite{peluse_2019} is that in contrast to the $D=1$ case, where we would control (\ref{operator}) by Gowers norms of each of the function $f_0, ..., f_t$, in the $D>1$ case we can only bound (\ref{operator}) in terms of some Gowers norm $U^s(\v_t)$ of the function $f_t$. Moreover, obtaining such a bound is only possible under the extra assumption that $\deg P_t > \max(\deg P_1, ..., \deg P_{t-1}, 0)$, whereas the $D=1$ case only requires linear independence of $P_1, ..., P_t$. The PET induction procedure that produces such a bound has been developed in \cite{chu_frantzikinakis_host_2011}, and we adapt the results of this paper in Section \ref{section on PET}.

In the $D=1$ case, the $U^1$ Gowers norm is of ``$L^1$ type", in the sense that $\|f\|_{U^1} = |\EE_{x} f(x)|\leq\|f\|_{1}$ for any $f:\FF_p\to\CC$. However, in the $D>1$ case the $U^1(\v)$ norm is of ``$L^2$ type" for any nonzero vector $\v\in\FF_p^D$, in the sense that $\|f\|_{U^1(\v)}=\|\EE(f|V)\|_{2}$. As a consequence, it turns out that we need to obtain a Gowers norm control of the $L^2$ norm of the function
\begin{align*}
    G_t(\x) = \EE_y\prod_{i=1}^t f_i(\x+ \v_i P_i(y)) 
\end{align*}
rather than the operator (\ref{operator}). Together with an application of the Cauchy-Schwarz inequality, this implies the Gowers norm control of (\ref{operator}). 

To be able to perform induction on $t$, we need to consider more general operators
\begin{align*}
    G_{m,t}(\x) = \EE_y\prod_{i=1}^m f_i(\x+ \v_i P_i(y)) \prod_{i=m+1}^t e_p(\phi_i(\x)P_i(y))1_\U(\x)
\end{align*}
for some phase functions $\phi_{m+1}, ..., \phi_t:\FF_p^D\to\FF_p$ and $\U\subseteq\FF_p^D$. By applying a trick from Lemma 5.12 of \cite{prendiville_2020} and a variant of the PET induction procedure outlined in Section \ref{section on PET}, we show that this operator is controlled by the $U^s(\v_m)$ of the dual function
\begin{align*}
    F_{m,t}(\x) = &\EE_{y,k} \left(\prod_{i=1}^{m-1}f_i(\x+\v_i P_i(y)-\v_m P_m(y+k))\overline{f_i(\x+\v_i P_i(y+k)-\v_m P_m(y+k))}\right)\\
    &f_m(\x+\v_m (P_m(y)- P_m(y+k)))\left(\prod_{i=m+1}^t e_p(\phi_i(\x -\v_m P_m(y+k)) \partial_k P_i(y))\right)1_\U(\x-\v_m P_m(y+k)).
\end{align*}
A degree-lowering argument then shows us that the $U^s(\v_m)$ norm of $F_{m,t}$ can be bounded from above by a small power of $\|F_{m,t}\|_{U^1(\v_m)}$ and an error term $O(p^{-c})$. This norm can in turn be bounded from above by the norms $\|f_i\|_{U^1(\v_i)}$ for $1\leq i\leq m$, from which we deduce that 
\begin{align*}
    \|G_{m,t}\|_{2}^2 = \EE_{\x}\left|\prod_{i=1}^m \EE(f_i|V_i)(\x)\right|^2 \prod_{i=m+1}^t 1_{\phi_i(\x) = 0} 1_\U(\x) + O(p^{-c})
\end{align*}
Theorem \ref{count} follows by taking $m=t$ and $\U=\FF_p^D$.

The proof that the $L^2$ norm of $G_{m,t}$ is controlled by  the norms $\|f_i\|_{U^1(\v_i)}$ for $1\leq i\leq m$, with all the degree-lowering arguments, occupies the entirety of Section \ref{section on computations}. In Section \ref{section on bounds}, we conclude the proof of Theorem \ref{count} and use it to derive Theorem \ref{bounds}.

\section{Controlling counting operators by Gowers norms}\label{section on PET}
The material in this section follows closely Sections 4 and 5 of \cite{chu_frantzikinakis_host_2011}. We say that two nonconstant polynomials $P, Q\in\ZZ[y]$ are equivalent, denoted $P\sim Q$, if they have the same degree and the same highest-degree coefficient; equivalently, $P\sim Q$ iff $\deg(P-Q)<\min\{\deg P,\; \deg Q\}$. 

Let $t, m\in\ZZ$ and $\P_j = (P_{j1}, ..., P_{jm})\in\ZZ[y]^m$ for $1\leq j\leq t$. 
We want to determine when the operator
\begin{align}\label{general counting operator}
    \EE_{\x, y} f_0(\x) \prod_{i=1}^m f_i(\x + \v_1 P_{1i}(y) + ... + \v_t P_{ti}(y))
\end{align}
is controlled by a Gowers norm for some nonzero vectors $\v_1, ..., \v_t\in\ZZ^D$, and the tuple $\P=(\P_1, ..., \P_t)$ is a compact way of encoding information about the polynomials appearing in (\ref{general counting operator}). 

Let $d=\max\limits_{j, i} \deg P_{ji}$. We define $$\P_j' = \{P_{ji}: \deg P_{j'i} = 0\; {\rm{for}}\; j<j'\leq t\},$$
and we let $w_{jk}$ be the number of distinct equivalence classes of polynomials of degree $k$ in $\P_j'$. The \emph{type} of the family $(\P_1, ..., \P_t)$ is the matrix
\begin{align*}
    \begin{pmatrix}
    w_{11} & \hdots & w_{1d}\\
    w_{21} & \hdots & w_{2d}\\
    \vdots & \hdots & \vdots\\
    w_{t1} & \hdots & w_{td}.
    \end{pmatrix}
\end{align*}

Given two $t\times d$ matrices $W=(w_{jk})$ and $W'=(w'_{jk})$, we order them in the reversed lexicographic way; that is, $W<W'$ if $w_{td}<w'_{td}$, or $w_{td}=w'_{td}$ and $w_{t(d-1)}<w'_{t(d-1)}$, ..., or $w_{tk} = w_{tk}$ for all $1\leq k\leq d$ and $w_{(t-1)d}<w'_{(t-1)d}$, and so on. 

The family $(\P_1, ..., \P_t)$ is \emph{nice} if 
\begin{enumerate}
    \item $\deg P_{tm} \geq \deg P_{ti}$ for $1\leq i\leq m$;
    \item $\deg P_{tm} > \deg P_{ji}$ for $1\leq j < t$ and $1\leq i\leq m$ if $t>1$, and $\deg P_{tm}\geq 1$ if $t = 1$. 
    \item $\deg(P_{tm} - P_{ti}) > \deg(P_{jm}-P_{ji})$ for  $1\leq j < t$ and $1\leq i< m$.
\end{enumerate}

The arguments from Sections 4 and 5 of \cite{chu_frantzikinakis_host_2011}, after appropriate adaptations to the finite field setting, can be used to show the following.
\begin{proposition}\label{general Gowers norm control}
Let $m,t, d\in\NN_+$ and $\P = (\P_1, ..., \P_t)$ be a nice family of polynomials of degree $d$. There exist $s\in\NN_+$ and $C, c>0$ depending only on $m, t, d$ such that for any nonzero vectors $\v_1, ..., \v_t\in\ZZ^D$ and any 1-bounded functions $f_0, ..., f_m:\FF_p^D\to\CC$, we have the bound
\begin{align*}
    \left|\EE_{\x, y} f_0(\x) \prod_{i=1}^m f_i(\x + \v_1 P_{1i}(y) + ... +\v_t P_{ti}(y))\right|\leq \|f_m\|_{U^s(\v_t)}^c + C p^{-c}.
\end{align*}
\end{proposition}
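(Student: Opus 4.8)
The plan is to adapt the PET (polynomial exhaustion technique) induction scheme of Chu--Frantzikinakis--Host to the finite field setting, running a double induction on the type matrix $W$ of the family $\P$ in the reversed lexicographic order. The base case is when the type matrix has $w_{jk}=0$ for all $j$ and all $k\geq 2$ and $w_{11} = \dots = w_{t1}$ minimal, i.e. (after the niceness reductions) the configuration is essentially linear in $y$; here one reads off the $U^s(\v_t)$ control directly from a single van der Corput / Cauchy--Schwarz step followed by Lemma~\ref{Weyl sums} or, more simply, from a Gowers--Cauchy--Schwarz expansion, noting that $U^s(\v_t)$ is a genuine norm for $s\geq 2$.

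For the inductive step I would proceed as follows. Apply the Cauchy--Schwarz (van der Corput) inequality in the variable $y$ with a shift $k$, which replaces each factor $f_i(\x + \sum_j \v_j P_{ji}(y))$ by a product $\Delta_{\ldots} f_i$ of the form $f_i(\x + \sum_j \v_j P_{ji}(y))\overline{f_i(\x + \sum_j \v_j P_{ji}(y+k))}$, after a harmless change of variables absorbing the term corresponding to $i=m$ (the ``leading'' function) — this is the standard trick of differencing so as to eliminate $f_0$ and to lower the complexity of the polynomial family by one in the reversed-lex order. One checks that the new family $\P^\ast$ (depending now on the extra parameter $k$, for generic $k$) is again nice and has strictly smaller type: conditions (i)--(iii) of niceness are exactly what is needed to guarantee that the leading polynomial $P_{tm}$ and the leading differences $P_{tm}-P_{ti}$ retain their dominant role after differencing, so that the distinguished slot is preserved and the relevant $w_{tk}$ strictly decreases (or, if it was already $1$ in the top degree, the induction passes to a lower row/degree). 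By the inductive hypothesis applied uniformly over $k$, the differenced operator is bounded by $\|g_m\|_{U^{s'}(\v_t)}^{c'} + Cp^{-c'}$ where $g_m = g_{m,k}$ is the relevant differenced version of $f_m$; averaging over $k$ and using the defining identity $\|f_m\|_{U^{s'+1}(\v_t)}^{2^{s'+1}} = \EE_k \|\Delta_{\v_t k} f_m\|_{U^{s'}(\v_t)}^{2^{s'}}$ together with Hölder, one folds the extra difference back into a higher Gowers norm of $f_m$ itself, which yields the claimed bound with $s = s'+1$.

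The bookkeeping is entirely combinatorial on the type matrices, so the constants $s, C, c$ depend only on $m,t,d$ and not on $p$, the $\v_j$, or the precise coefficients of the $P_{ji}$ — exactly as asserted. The point where care is needed, and the main obstacle, is verifying that the van der Corput step genuinely decreases the type in the reversed-lex order while preserving niceness: one has to pick the correct variable to difference in and the correct factor to normalise by, handle the edge cases where several polynomials become constant or equivalent after differencing (this is the role of the primed families $\P_j'$ and the equivalence relation $\sim$), and track how the distinguished entry $w_{tm}$-slot migrates. This is precisely the content of Sections 4--5 of \cite{chu_frantzikinakis_host_2011}, and translating it requires only replacing the ergodic averages by finite averages over $\FF_p$ and invoking Lemma~\ref{Weyl sums} in the base case to control the genuinely ``lowest-degree'' exponential sums; the relation (\ref{relation between Gowers norms}) between $U^s(\v_t)$ and one-dimensional Gowers norms lets one quote the one-dimensional Gowers--Cauchy--Schwarz machinery essentially verbatim. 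A secondary technical nuisance is ensuring the error terms $O(p^{-c})$ accumulate correctly through the at most $O_{m,t,d}(1)$ iterations, which is routine since each step multiplies the error by a bounded power and adds a bounded number of new ones.
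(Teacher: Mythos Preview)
Your overall architecture --- PET induction on the CFH type matrix, with the linear case as base --- is exactly the paper's approach. But there is a real gap in your description of the inductive step, and it is precisely at the point you flag as ``the main obstacle''.

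You write that after van der Corput the distinguished function becomes $g_{m,k}$, ``the relevant differenced version of $f_m$'', and you then plan to fold back via the recursion $\|f_m\|_{U^{s'+1}(\v_t)}^{2^{s'+1}} = \EE_k\|\Delta_{\v_t k}f_m\|_{U^{s'}(\v_t)}^{2^{s'}}$. This does not work. The van der Corput shift is in the \emph{polynomial} variable $y$, so the product you obtain is
\[
f_m\bigl(\x + \textstyle\sum_j \v_j P_{jm}(y)\bigr)\,\overline{f_m\bigl(\x + \textstyle\sum_j \v_j P_{jm}(y+k)\bigr)},
\]
and the implicit shift $\sum_j \v_j(P_{jm}(y+k)-P_{jm}(y))$ depends on $y$, not just on $k$; it is \emph{not} of the form $\v_t k$ unless $P_{tm}$ is already linear and all $P_{jm}$ with $j<t$ are constant. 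So the inductive hypothesis does not hand you $\|\Delta_{\v_t k} f_m\|_{U^{s'}(\v_t)}$, and there is nothing to fold.

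What actually happens (and what the paper does, following CFH) is that after one Cauchy--Schwarz and the translation by the $Q_j$'s, the new family $S_h\P$ has $2m$ slots, with $f_m$ and $\overline{f_m}$ occupying two \emph{separate} slots at two \emph{distinct} polynomial arguments $\tilde P_{jm;0}$ and $\tilde P_{jm;h}$. The distinguished function in $S_h\P$ is simply $\overline{f_m}$ itself, not a differenced object; so the induction hypothesis, applied for each fixed $h$ in the good set, already yields $\|f_m\|_{U^s(\v_t)}^{c_0}+C_0 p^{-c_0}$, and averaging over $h$ finishes the step with no change in $s$. The final value of $s$ is set at the linear base case by the generalised von Neumann lemma, where the number of functions has doubled at each of the at most $M(m,t,d)$ iterations; this is why $s$ depends only on $m,t,d$. (Incidentally, Lemma~\ref{Weyl sums} is not used in the base case; it is pure Gowers--Cauchy--Schwarz via Lemma~\ref{generalised von Neumann} and the relation~(\ref{relation between Gowers norms}).)
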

To prove Proposition \ref{general Gowers norm control}, we need a 1-dimensional estimate, which is a version of the classical \emph{generalised von Neumann theorem}. We advise the reader to consult \cite{green_2007} so as to see how statements like this can be proved.
\begin{lemma}\label{generalised von Neumann}
Let $m\in\NN_+$ $f_0, ..., f_m:\FF_p\to\CC$ be 1-bounded and $0< a_m < p$ be distinct from $a_1, ..., a_{m-1}$. Then 
\begin{align*}
    \left|\EE_{x,y}f_0(x)f_1(x+a_1 y)\cdots f_m(x + a_m y)\right|\leq \|f_m\|_{U^m}.
\end{align*}
\end{lemma}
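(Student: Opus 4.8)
The plan is to prove Lemma~\ref{generalised von Neumann} by the standard van der Corput / Cauchy--Schwarz argument that underlies the generalised von Neumann theorem, peeling off the functions $f_0, f_1, \dots, f_{m-1}$ one at a time. After a change of variables we may assume $a_m = 1$ (dividing the $y$-summation by the invertible element $a_m$, which is legitimate since $0<a_m<p$), so that the average becomes $\EE_{x,y} f_0(x) f_1(x+b_1 y)\cdots f_{m-1}(x+b_{m-1}y) f_m(x+y)$ with $b_i = a_i/a_m$ all distinct from $1$ and from each other. Substituting $x \mapsto x - y$ (equivalently reparametrising so that $f_m$ sits at the point $x$) turns this into $\EE_{x,y} g_0(x - c_0 y) g_1(x-c_1 y)\cdots g_{m-1}(x - c_{m-1}y)\, f_m(x)$ where the $c_i$ are nonzero and pairwise distinct. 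The point of this normalisation is that $f_m$ no longer depends on $y$, so one can pull it out of the inner $y$-average.

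The key steps, in order: (i) Write the quantity as $\EE_x f_m(x)\, H(x)$ where $H(x) = \EE_y \prod_{i=0}^{m-1} g_i(x - c_i y)$, and apply Cauchy--Schwarz in $x$ to bound its modulus by $\|f_m\|_2 \cdot (\EE_x |H(x)|^2)^{1/2} \le (\EE_x |H(x)|^2)^{1/2}$, using $1$-boundedness of $f_m$. (ii) Expand $|H(x)|^2$ as a double average over $y, y'$ and substitute $y' = y + h$; the variable $x$ then appears through the expressions $x - c_i y$ and $x - c_i y - c_i h$, and after the further substitution $x \mapsto x + c_0 y$ (absorbing one of the $y$-shifts) the innermost average over $y$ factors as a product over $i \ge 1$ of terms of the form $\Delta_{-c_i h}\!\left(\text{a shift of } g_i\right)$ evaluated at a point that is linear in $x$ and $y$. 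Crucially, the $i=0$ term becomes independent of $y$ and $1$-bounded, and $g_0$ disappears from the $y$-average. (iii) Iterate: after $j$ applications of Cauchy--Schwarz (introducing shift parameters $h_1, \dots, h_j$) one controls the original average by the $2^{-j}$-th power of an average of the form $\EE_{x, h_1, \dots, h_j}\left|\EE_y \prod_{i \ge j} \Delta_{(\ast) h_1, \dots, (\ast) h_j} g_i(\ast)\right|^{(\ast)}$ times lower-order pieces; the distinctness of the $c_i$ guarantees that at each stage the variable being eliminated genuinely decouples. After $m$ steps only $g_m = $ (a shift of) $f_m$ survives in the innermost expression, and the resulting quantity is exactly an average of a box-type product of $2^m$ conjugate-twisted copies of $f_m$ along the direction of the $x$-variable, i.e. $\|f_m\|_{U^m}^{2^m}$ up to the harmless linear reparametrisations (which leave $U^m$ invariant by translation- and dilation-invariance of the $1$-dimensional Gowers norm). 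Taking $2^m$-th roots gives the claimed bound $\|f_m\|_{U^m}$.

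The main obstacle is bookkeeping rather than conceptual: one must track the affine-linear substitutions carefully so that (a) at each Cauchy--Schwarz step the function being discarded is genuinely independent of the averaging variable — this is precisely where the hypothesis that $a_m$ is distinct from $a_1, \dots, a_{m-1}$ (so all $c_i$ are nonzero and distinct) is used — and (b) the final expression is recognised as the \emph{standard} $U^m$ Gowers norm of $f_m$ and not some twisted variant. Both issues are handled by the observation that the $1$-dimensional Gowers norm $\|\cdot\|_{U^m}$ is invariant under the substitutions $f(x) \mapsto f(\alpha x + \beta)$ for $\alpha \in \FF_p^\times$, $\beta \in \FF_p$; since every manipulation above is of this affine-linear type in the relevant variable, the twists accumulated along the way wash out at the end. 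A clean way to organise the induction is to prove, by induction on $k = m, m-1, \dots, 1$, the more flexible statement that an average of the shape $\EE_{x,y} \prod_{i=0}^{k} h_i(x + d_i y)$ with the $d_i$ pairwise distinct and the $h_i$ $1$-bounded is bounded by $\|h_k\|_{U^k}$; the inductive step is exactly one application of the Cauchy--Schwarz maneuver described above, and the base case $k=1$ is the identity $\EE_{x,y} h_0(x) h_1(x+d_1 y) = \EE_x h_0(x) \cdot \EE_x h_1(x)$ together with $|\EE_x h_1| = \|h_1\|_{U^1}$. This is the argument carried out in detail in \cite{green_2007}, to which we refer the reader.
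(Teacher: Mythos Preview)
The paper does not give its own proof of this lemma; it simply writes ``We advise the reader to consult \cite{green_2007} so as to see how statements like this can be proved.'' Your sketch is precisely the standard repeated Cauchy--Schwarz (van der Corput) argument that appears in that reference, so your approach coincides with the one the paper points to. One minor point: the hypothesis only asserts that $a_m$ is distinct from $a_1,\dots,a_{m-1}$, not that the $a_i$ are pairwise distinct (or nonzero), so your claim that the $b_i$ are ``all distinct from $1$ and from each other'' needs the trivial preliminary reduction of merging any $f_i, f_j$ with $a_i=a_j$ into a single $1$-bounded function; once that is said, your inductive scheme goes through exactly as written.
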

\begin{proof}[Proof of Proposition \ref{general Gowers norm control}]
Suppose first that $\deg P_{tm} = 1$. By the definition of nice families of polynomials, this means that $P_{ti}(y) = a_i y + b_i$ for integers $a_{1}, ..., a_{m}, b_1, ..., b_m$, and $P_{ji}$ is constant for all $1\leq j < t$. Translating each $f_i$ by the constant expression $\v_1 P_{1i}(y) + ... + \v_{t-1} P_{(t-1)i}(y)$ if necessary, we end up with the expression
\begin{align*}
    \EE_{\x, y} f_0(\x) f_1(\x + \v_t (a_{1}y + b_1)) \cdots f_m(\x + \v_t (a_{m}y+b_m)).
\end{align*}
The assumption $\deg P_{tm} = 1$ and the condition (iii) in the definition of nice families imply that
\begin{align}\label{distinctness of a_m}
    a_m \notin \{0, a_1, ..., a_{m-1}\};   
\end{align}
otherwise $\deg (P_{tm} - P_{ti}) = 0 = \deg (P_{jm} - P_{ji})$ for some $0\leq i < m$ and all $1 \leq j < t$.

We let $f_{i,\x}(y) = f_i(\x + \v_t (y + b_i))$, so that
\begin{align*}
    &\left|\EE_{\x, y} f_0(\x) f_1(\x + \v_t (a_{1}y + b_1)) \cdots f_m(\x + \v_t (a_{m}y + b_m))\right|\\
    &= \left|\EE_{\x, n, y} f_0(\x) f_1(\x + \v_t(n+a_1 y + b_1)) \cdots f_m(\x + \v_t (n + a_m y + b_m))\right|\\
    &\leq \EE_{\x}\left|\EE_{n,y}  f_{1, \x}(n+ a_1 y)\cdots f_{m, \x}(n+ a_m y)  \right|.
\end{align*}

Using Lemma \ref{generalised von Neumann} together with (\ref{distinctness of a_m}), and assuming by a compactness argument that $p$ is large enough with respect to $a_m$, we deduce that
\begin{align*}
    \left|\EE_{n,y}  f_{1, \x}(n+ a_1 y)\cdots f_{m, \x}(n+ a_m y)  \right|\leq \|f_{m,\x}\|_{U^m}.
\end{align*}
We then apply the relation (\ref{relation between Gowers norms}) and the H\"{o}lder inequality to conclude that
\begin{align*}
    \left|\EE_{\x, y} f_0(\x) f_1(\x + \v_t a_{1}y) \cdots f_m(\x + \v_t a_{m}y)\right|\leq \EE_\x \|f_{m,\x}\|_{U^{m}}\leq \|f_m\|_{U^{m}(\v_t)}, 
\end{align*}
which finishes the proof in the $d=1$ case. 

Suppose now that $\deg P_{tm} = d > 1$. We can assume that for each $1\leq i\leq m$, the polynomial map $y\mapsto \v_1 P_{1i}(y) + ... + \v_t P_{ti}(y)$ is nonconstant, otherwise we incorporate $f_i$ into $f_0$. We
proceed in three steps. First, we apply the Cauchy-Schwarz inequality to bound
\begin{align*}
    &\left|\EE_{\x, y} f_0(\x) \prod_{i=1}^m f_i(\x + \v_1 P_{1i}(y) + ... +\v_t P_{ti}(y))\right|^2\\
    &\leq \left|\EE_{\x, y, h} \prod_{i=1}^m f_i(\x + \v_1 P_{1i}(y) + ... +\v_t P_{ti}(y))\overline{f_i(\x + \v_1 P_{1i}(y+h) + ... +\v_t P_{ti}(y+h))}\right|.
\end{align*}
Second, we translate $\x\mapsto \x - \v_1 Q_1(y) - ... - \v_t Q_t(y)$ for polynomials $Q_1, ..., Q_t\in\ZZ[y]$ to be chosen later, set $\tilde{P}_{ji; h}(y) = P_{ji}(y+h) - Q_i(y)$ and use the triangle inequality to bound the expression above by
\begin{align}\label{inequality in PET}
    \EE_h \left|\EE_{\x, y} \prod_{i=1}^m f_i(\x + \v_1 \tilde{P}_{1i;0}(y) + ... +\v_t \tilde{P}_{ti;0}(y))\overline{f_i(\x + \v_1 \tilde{P}_{1i;h}(y) + ... +\v_t \tilde{P}_{ti;h}(y))}\right|.
\end{align}
We choose the polynomials $Q_1, ..., Q_t$ in such a way that for all except at most $m-1$ differences $h\in\ZZ$, the family $S_h\P = (S_h\P_1, ..., S_h\P_t)$, where $S_h\P_j = (\tilde{P}_{j1;0}, \tilde{P}_{j1; h}, ..., \tilde{P}_{jm;0}, \tilde{P}_{jm;h})$, is nice and has a type strictly smaller than $\P$. The procedure of picking appropriate $Q_1, ..., Q_t$ goes the same way as in Lemma 5.4 of \cite{chu_frantzikinakis_host_2011}; we restate here the algorithm from that paper for completeness. Let $l = \min\limits_j\{\P_j' {\rm{\;is\; nonempty}}\}$. If $l<t$, then we take $Q_l$ to be a polynomial of the smallest degree in $\P_l'$ and set $Q_j = 0$ for $j\neq l$. Then the $l$-th row of the type matrix gets reduced while the rows indexed $l+1, ..., t$ remain unchanged. 

If $l=t$, i.e. $\P_1', ..., \P_{t-1}'$ are all empty, then we split into two cases. If 
$P_{ti}\sim P_{tm}$ for all $1\leq i \leq m$, then we set $Q_j = P_{jm}$ for all $1\leq j\leq t$. In this case, $w_{td}$ decreases from 1 to 0, and so we obtain a strictly smaller type matrix. Otherwise we choose $1\leq i\leq m$ such that $P_{ti}$ has the smallest degree of all $P_{t1}, ..., P_{tm}$ and let $Q_j = P_{ji}$ for all $1\leq j\leq t$. This reduces the $t$-th row of the type matrix by one. 



Thus, the family $S_h \P$ has a strictly smaller type than $\P$ for all $h\in\ZZ$. Lemmas 4.4 and 5.4 of \cite{chu_frantzikinakis_host_2011} further show that for all except at most $m-1$ values of $h\in\ZZ$, the family $S_h\P$ is nice and has a strictly smaller type than $\P$. The proof of the niceness of $S_h\P$ in \cite{chu_frantzikinakis_host_2011} uses the assumption $d\geq 2$, hence the necessity to distinguish the cases $d = 1$ and $d \geq 2$.

We complete the proof of Proposition \ref{general Gowers norm control} in the case $d\geq 2$ by induction on the type of the progression using the $d=1$ case as the base case. By Lemma 4.5 and 5.5 of \cite{chu_frantzikinakis_host_2011}, there exists a constant $M=M(d, m, t)$ independent of the choice of the polynomial family $\P$ or vectors $\v_1, ..., \v_t$, such that after repeating the abovementioned procedure of applying the Cauchy-Schwarz inequality and performing the change of variables at most $M$ times to a polynomial family $\P$, we end up with a nice polynomial family of degree 1. Letting $H$ be the set of $h$ such that $S_h \P$ is nice, and using the fact that the exceptional set $\FF_p \setminus{H}$ has at most $m-1$ elements, we obtain the bound
\begin{align*}
    &\left|\EE_{\x, y} f_0(\x) \prod_{i=1}^m f_i(\x + \v_1 P_{1i}(y) + ... +\v_t P_{ti}(y))\right|^2\\
    &\leq \EE_h\left|\EE_{\x, y} \prod_{i=1}^m f_i(\x + \v_1 P_{1i}(y) + ... +\v_t P_{ti}(y))\overline{f_i(\x + \v_1 P_{1i}(y+h) + ... +\v_t P_{ti}(y+h))}\right|\cdot 1_H(h) + (m-1) p^{-1}.
\end{align*}
We now apply the induction hypothesis to the families $(S_h \P)_{h\in H}$, which by assumption are nice and have type strictly less than $\P$, to conclude that there exist $c_0, C_0, s > 0$, independent of $h$, such that
\begin{align*}
    \left|\EE_{\x, y} \prod_{i=1}^m f_i(\x + \v_1 \tilde{P}_{1i;0}(y) + ... +\v_t \tilde{P}_{ti;0}(y))\overline{f_i(\x + \v_1 \tilde{P}_{1i;h}(y) + ... +\v_t \tilde{P}_{ti;h}(y))}\right| \leq \|f_m\|_{U^s}^{c_0} + C_0 p^{-c_0}
\end{align*}
for every $h\in H$. It follows that 
\begin{align*}
    \left|\EE_{\x, y} f_0(\x) \prod_{i=1}^m f_i(\x + \v_1 P_{1i}(y) + ... +\v_t P_{ti}(y))\right| &\leq \left(\|f_m\|_{U^s}^{c_0} + C_0 p^{-c_0} + (m-1)p^{-1}\right)^\frac{1}{2}\\
    &\leq \|f_m\|_{U^s}^{c} + C p^{-c}
\end{align*}
for some $c, C>0$ that only depend on $m, d, t$. 
\end{proof}

We will amply use the following corollary of Proposition \ref{general Gowers norm control}, which plays the same role as Proposition 2.2 of \cite{peluse_2019} in that paper. 
\begin{corollary}\label{Gowers norm control of twisted operators}
Let $1\leq m\leq t$ be integers and $P_1, ..., P_t\in\ZZ[y]$ be polynomials satisfying $0< \deg P_1 < \deg P_2 < ... < \deg P_m$.
There exist $s\in\NN_+$ and $c>0$ such that for any nonzero vectors $\v_1, ..., \v_m\in\ZZ^D$, 1-bounded functions ${f_0, ..., f_m, g_1, ..., g_m:\FF_p^D\to\CC}$, and phase functions $\phi_{m+1}, ..., \phi_t:\FF_p^D\to\FF_p$, we have the bound
\begin{align*}
    \left|\EE_{\x, y,k}f_0(\x)\prod_{i=1}^{m}f_i(\x+\v_i P_i(y))\overline{g_i(\x+\v_i P_i(y+k))}\prod_{i = m+1}^t e_p(\phi_i(\x)\partial_k P_i(y)) \right|\leq \|g_m\|_{U^s(\v_m)}^c + O(p^{-c}).
\end{align*}
\end{corollary}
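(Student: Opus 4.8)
The plan is to collapse the double average over $y$ and $k$ into a single-variable counting operator and then to run the PET machinery of Section~\ref{section on PET}, after disposing of the phase factors. Concretely, I would first substitute $z=y+k$, which makes $y$ and $z$ independent and splits each phase as $e_p(\phi_i(\x)\partial_kP_i(y))=e_p(\phi_i(\x)P_i(y))\,\overline{e_p(\phi_i(\x)P_i(z))}$. The operator then factors as $\EE_\x f_0(\x)A(\x)\overline{B(\x)}$, where $A(\x)=\EE_y\prod_{i=1}^m f_i(\x+\v_iP_i(y))\prod_{i=m+1}^t e_p(\phi_i(\x)P_i(y))$ and $B$ is the analogous average with each $g_i$ in place of $f_i$; both are $1$-bounded. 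Since $f_0A$ is $1$-bounded, the Cauchy--Schwarz inequality bounds the modulus of the operator by $\|B\|_2$, whose square $\|B\|_2^2=\EE_{\x,y}\overline{B(\x)}\prod_{i=1}^m g_i(\x+\v_iP_i(y))\prod_{i=m+1}^t e_p(\phi_i(\x)P_i(y))$ is again an operator of the original kind, now with the $1$-bounded weight $\overline{B}$ in place of $f_0$. So it suffices to prove that
\begin{align*}
\left|\EE_{\x,y}f_0(\x)\prod_{i=1}^m g_i(\x+\v_iP_i(y))\prod_{i=m+1}^t e_p(\phi_i(\x)P_i(y))\right|\leq \|g_m\|_{U^s(\v_m)}^c+O(p^{-c})
\end{align*}
for every $1$-bounded $f_0$, the functions $f_1,\dots,f_m$ having disappeared.

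Next I would apply the trick of Lemma 5.12 of \cite{prendiville_2020} to remove the phases: pass to $\FF_p^{D+(t-m)}$ by adjoining a coordinate $u_i$ for each $i\in\{m+1,\dots,t\}$, and for $\x'=(\x,u_{m+1},\dots,u_t)$ set $h_i(\x')=e_p(\phi_i(\x)u_i)$, so that $h_i(\x'+\v_i'P_i(y))=h_i(\x')\,e_p(\phi_i(\x)P_i(y))$ with $\v_i'$ the standard basis vector in the $u_i$-direction. Padding $\v_1,\dots,\v_m$ with zeros, absorbing the base-point factors $\overline{h_i(\x')}$ into $f_0$, and averaging over the new coordinates converts the displayed left-hand side into a phase-free counting operator over $\FF_p^{D+(t-m)}$, involving $t$ vectors (the padded $\v_1,\dots,\v_m$ and the dummy $\v_{m+1}',\dots,\v_t'$), $t$ functions (the images of $g_1,\dots,g_m$ together with $h_{m+1},\dots,h_t$), and a diagonal polynomial array; what remains is to control this operator by the Gowers norm of (the image of) $g_m$ along $\v_m$.

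The main obstacle is that this last operator is \emph{not} in general of the form covered by Proposition~\ref{general Gowers norm control} with $g_m$ distinguished: the polynomials $P_{m+1},\dots,P_t$ attached to the phase functions $h_i$ can have degree exceeding $\deg P_m$ --- indeed in the intended applications $\deg P_m<\deg P_{m+1}<\dots<\deg P_t$ --- which violates condition~(ii) in the definition of a nice family. So one cannot simply quote the proposition, and instead must re-run the PET reduction of Section~\ref{section on PET}, tracking the auxiliary phase factors as they pass through the Cauchy--Schwarz and change-of-variables steps. The decisive point is that a phase $e_p(\phi(\x)R(y))$ turns, under one such step, into $e_p(\phi(\x)(R(y)-R(y+h)))$ up to a factor depending only on the newly introduced variables, so its degree in $y$ drops at each step and it eventually becomes trivial; hence the phases never contribute to the ``type'' of the polynomial family, the type-reduction still terminates in a nice degree-$1$ family with the distinguished function equal to (the image of) $g_m$, and the degree-$1$ base case --- a linear configuration that may still carry an oscillatory phase --- is disposed of by combining Lemma~\ref{generalised von Neumann} for the function part with Lemma~\ref{Weyl sums} to absorb the residual phase. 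Keeping track of the interplay between the phases and the type, and ensuring the distinguished function is not consumed before every phase has died, is the technical heart of the matter, and the step I expect to be hardest.
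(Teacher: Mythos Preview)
Your factoring in step 1 is correct and pleasant, and the dummy-variable trick in step 2 is correctly described, but there is a genuine gap in step 3: the claim that a phase $e_p(\phi(\x)R(y))$ transforms, under one PET iteration, into $e_p(\phi(\x)\partial_hR(y))$ times a factor in the new variables is false. The Cauchy--Schwarz half of the iteration does give $e_p(\phi(\x)\partial_hR(y))$, but the subsequent change of variables $\x\mapsto\x-\sum_j\v_jQ_j(y)$ replaces this by $e_p\bigl(\phi(\x-\sum_j\v_jQ_j(y))\,\partial_hR(y)\bigr)$, and since $\phi$ is an \emph{arbitrary} function this is no longer of the form $\psi(\x)\cdot(\text{polynomial in }y)$. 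Once the product structure is lost the degree-in-$y$ bookkeeping collapses, and you cannot iterate. (Try the toy case $m=1$, $D=1$, $P_1(y)=y$, $P_2(y)=y^2$: after one CS and the translation $x\mapsto x-y$, the phase is $e_p(-\phi_2(x-y)(2yh+h^2))$, and there is no useful way to continue.) The dummy-variable reformulation does not rescue this, because translating in the original $\v_j$-directions still moves the argument of $\phi_i$ inside $h_i$.

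The paper's fix is to \emph{decouple} the two tasks rather than interleave them. Working directly with the original operator (so your steps 1--2 are bypassed entirely), one applies Cauchy--Schwarz in all variables \emph{except} $y$---crucially, with no translation of $\x$---once, obtaining $\partial_{k_1,k_2}P_i$ in the phase; then again, obtaining $\partial_{k_1,k_2,k_3}P_i$; and so on. Since $\phi_i(\x)$ is never disturbed, the degree in $y$ genuinely drops by one each time, and after $d+1$ applications (where $d=\max_{i>m}\deg P_i$) every phase vanishes identically. The functions have meanwhile become $g_i^{(\w)}(\x+\v_iP_i(y+\w\cdot\underline{k}))$ for $\w\in\{0,1\}^{d+2}$. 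Now pigeonhole to fix a generic $(k_1,\dots,k_{d+2})$: the family $\{P_i(y+\w\cdot\underline{k})\}_{1\leq i\leq m,\ \w}$ is nice (this uses $0<\deg P_1<\dots<\deg P_m$ and that the shifts $\w\cdot\underline{k}$ are distinct), so Proposition~\ref{general Gowers norm control} applies as a black box with $g_m$ distinguished. No re-running of PET is needed.
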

\begin{proof}
By applying the Cauchy-Schwarz inequality in $\x$ and $k$ to
\begin{align}\label{counting operator with phases}
    \EE_{\x, y,k}f_0(\x)\prod_{i=1}^{m}f_i(\x+\v_i P_i(y))\overline{g_i(\x+\v_i P_i(y+k))}\prod_{i = m+1}^t e_p(\phi_i(\x)\partial_k P_i(y))
\end{align}
and setting $k_1 = k$, we observe that
\begin{align*}
    &\left|\EE_{\x, y,k}f_0(\x)\prod_{i=1}^{m}f_i(\x+\v_i P_i(y))\overline{g_i(\x+\v_i P_i(y+k))}\prod_{i = m+1}^t e_p(\phi_i(\x)\partial_k P_i(y)) \right|^2\\
    &\leq \EE_{\substack{\x, y,\\ k_1, k_2}}\prod_{i=1}^{m}\prod_{\w\in\{0,1\}^2} \C^{|w|} g_i^{(\w)}(\x+\v_i P_i(y+w_1 k_1 + w_2 k_2))\prod_{i = m+1}^t e_p(\phi_i(\x)\partial_{k_1, k_2} P_i(y)), 
\end{align*}
where $g_i^{(0,0)} = g_i^{(0,1)} = f_i$ and $g_i^{(1,0)} = g_i^{(1,1)} = g_i$.
Importantly, the degree of the polynomial $\partial_{k_1, k_2} P_i$ in $y$ is 1 less than the degree of $\partial_{k_1} P_i$. If $d = \max\{\deg P_{m+1}, ..., \deg P_t\}$, then we get rid of the phases $e_p(\phi_i(\x)\partial_k P_i(y))$ by applying the Cauchy Schwarz inequality $d+1$ times in all variables except $y$ to (\ref{counting operator with phases})  in a similar fashion. Thus,
\begin{align}\label{Cor 4.3, 1}
    \nonumber 
    &\left|\EE_{\x,y,k}f_0(\x)\prod_{i=1}^{m}f_i(\x+\v_i P_i(y))\overline{g_i(\x+\v_i P_i(y+k))}\prod_{i = m+1}^t e_p(\phi_i(\x)\partial_k P_i(y)) \right|^{2^{d+2}}\\
    &\leq \EE_{\substack{\x, y,\\ k_1, ..., k_{d+2}}}\prod_{i=1}^{m}\prod_{\w\in\{0,1\}^{d+2}} \C^{|w|} g^{(\w)}_i(\x+\v_i P_i(y+w_1 k_1 + ... + w_{d+2} k_{d+2})), 
\end{align}
where $g^{(\w)}_i = f_i$ if $w_1 = 0$ and $g^{(\w)}_i = g_i$ otherwise.

We now split into the cases $\deg P_m = 1$ and $\deg P_m >1$ and start with the former. In this case, the assumption $0 < \deg P_1 < ... < \deg P_m$ necessitates $m = 1$. Letting $P_m(y) = ay + b$, we rewrite (\ref{Cor 4.3, 1}) as
\begin{align*}
    \EE_{\substack{\x, y,\\ k_1, ..., k_{d+2}}}\prod_{\w\in\{0,1\}^{d+2}} \C^{|w|} g_{1}^{(\w)}(\x+ a \v_1(y+ w_1 k_1 + ... + w_{d+2} k_{d+2}))\\
    = \EE_{\substack{\x, y,\\ k_1, ..., k_{d+2}}}\prod_{w_2, ..., w_{d+2}\in\{0,1\}} \C^{|w|} (f_1(\x+ a \v_1(y + w_2 k_2 + ... + w_{d+2} k_{d+2}))\\
    \overline{g_1(\x+ a \v_1(y + k_1 + w_2 k_2 + ... + w_{d+2} k_{d+2}))}).
\end{align*}
Substituting $y \mapsto y/a, k_1 \mapsto k_1/a, ..., k_{d+2}\mapsto k_{d+2}/a$, applying the Cauchy-Schwarz inequality in $k_1$, and performing a change of variables, we bound the right-hand side of the expression above by
\begin{align*}
    \left(\EE_{\substack{\x, y,\\ k_1, ..., k_{d+2}}}\prod_{\w\in\{0,1\}^{d+2}} \C^{|w|} g_1(\x+ \v_1(y + k_1 + ... + k_{d+2}))\right)^\frac{1}{2} = \|g_1\|_{U^{d+2}(\v_1)}^{2^{d+1}}.
\end{align*}
Thus,
\begin{align*}
    \left|    \EE_{\x, y,k}f_0(\x)\prod_{i=1}^{m}f_i(\x+\v_i P_i(y))\overline{g_i(\x+\v_i P_i(y+k))}\prod_{i = m+1}^t e_p(\phi_i(\x)\partial_k P_i(y))\right|\leq \|g_m\|_{U^{d+2}(v_1)}^\frac{1}{2}
\end{align*}
whenever $\deg P_m = 1$. 

We now return to the case $\deg P_m >1$. For $(1-O(p^{-1}))p^{d+2}$ values $(k_1, ..., k_{d+2})\in\FF_p^{d+2}$, the expressions $(w_1 k_1 + ... + w_{d+2} k_{d+2})_{\w\in\{0,1\}^{d+2}}$ are all distinct. By (\ref{Cor 4.3, 1}) and the pigeonhole principle, there exists a tuple $(k_1, ..., k_{d+2})\in\FF_p^{d+2}$ satisfying this property, for which moreover
\begin{align*}
    &\left|\EE_{\x,y,k} f_0(\x) \prod_{i=1}^{m}f_i(\x+\v_i P_i(y))\overline{g_i(\x+\v_i P_i(y+k))}\prod_{i = m+1}^t e_p(\phi_i(\x)\partial_k P_i(y)) \right|^{2^{d+2}}\\
    &\leq \EE_{\x, y}\prod_{i=1}^{m}\prod_{\w\in\{0,1\}^{d+2}} \C^{|w|} g_i^{(\w)}(\x+\v_i P_i(y+w_1 k_1 + ... + w_{d+2} k_{d+2})) + O(p^{-1}).
\end{align*}
We fix this tuple. Letting $P_{i; \w}(y) = P_i(y+w_1 k_1 + ... + w_{d+2} k_{d+2})$, we rewrite the inequality above as 
\begin{align*}
    &\left|\EE_{\x,y,k} f_0(\x) \prod_{i=1}^{m}f_i(\x+\v_i P_i(y))\overline{g_i(\x+\v_i P_i(y+k))}\prod_{i = m+1}^t e_p(\phi_i(\x)\partial_k P_i(y)) \right|^{2^{d+2}}\\
    &\leq     \EE_{\x, y}\prod_{i=1}^{m}\prod_{\w\in\{0,1\}^{d+2}} \C^{|w|} g_{i}^{(\w)}(\x+\v_i P_{i; \w}(y))  + O(p^{-1}).
\end{align*}

For every $\w, \w'\in\{0,1\}^{d+2}$, the polynomials $P_{i; \w}$ and $P_{i; \w'}$ are distinct and equivalent. The polynomial family corresponding to the operator
\begin{align}\label{nice operator}
    \EE_{\x, y}\prod_{i=1}^{m}\prod_{\w\in\{0,1\}^{d+2}} \C^{|w|} g_{i}^{(\w)}(\x+\v_i P_{i; \w}(y)) 
\end{align}
is nice, which is a consequence of several facts:
\begin{enumerate}
    \item $\deg P_{m; \w} = \deg P_m > \deg P_i = \deg P_{i; \w'}$ for every $1\leq i < m$ and $\w, \w'\in\{0, 1\}^{d+2}$;
    \item $\deg (P_{i; \w}- P_{i; \w'}) = \deg P_i - 1$ for any distinct $\w, \w' \in\{0, 1\}^{d+2}$ and $1\leq i \leq m$; this follows from the fact that the expressions $(w_1 k_1 + ... + w_{d+2} k_{d+2})_{\w\in\{0,1\}^{d+2}}$ are all distinct;
    \item $\deg (P_{m; \w}- P_{m; \w'}) = \deg P_m - 1 > 0$ for distinct $\w, \w' \in\{0, 1\}^{d+2}$, which follows from the assumption $\deg P_m > 1$. 
\end{enumerate}
The properties (i) and (ii) in the definition of niceness follow from the first fact listed above; the property (iii) is a consequence of the other two facts. The proposition then follows from Proposition \ref{general Gowers norm control} applied to the operator (\ref{nice operator}). 


\end{proof}

\section{Degree lowering}\label{section on computations}
In this section, we fix integers $0\leq m\leq t$, nonzero vectors $\v_1, ..., \v_t\in\ZZ^D$, and polynomials $P_1, ..., P_t\in\ZZ[y]$ satisfying $0< \deg P_1 < \deg P_2 < ... < \deg P_t$. The main result of this section is the proposition below, from which we deduce Theorem \ref{count} in the next section. This result plays in our argument a similar part as Lemma 4.1 of \cite{peluse_2019} in that paper. 
\begin{proposition}\label{U^1 control of G}
There exists a constant $c>0$ with the following property: for all 1-bounded functions $f_1, ..., f_m:\FF_p^D\to\CC$, phase functions $\phi_{m+1}, ..., \phi_t:\FF_p^D\to\FF_p$ and subsets $\U\subset\FF_p^D$, the function
\begin{align*}
    G_{m,t}(\x) = \EE_y \prod_{i=1}^m f_i(\x+\v_i P_i(y)) \prod_{i=m+1}^t e_p(\phi_i(\x)P_i(y)) 1_\U(\x),
\end{align*}
satisfies
\begin{align*}
    \|G_{m,t}\|_{2}^2 = \EE_{\x}\left|\prod_{i=1}^m \EE(f_i|V_i)(\x)\right|^2 1_{\U'}(\x)+ O(p^{-2c}),
\end{align*}
where 
\begin{align*}
    \U'=\{\x\in \U: \phi_{m+1}(\x) = ... = \phi_t(\x)= 0\}.
\end{align*}
In particular, 
\begin{align*}
    \|G_{m,t}\|_{2}\leq \min_{1\leq i\leq m}\|f_i\|_{U^1(\v_i)} + O(p^{-c}),
\end{align*}
if $t\geq 1$, and if $\|G_{m,t}\|_{2}\geq \delta\gg p^{-c}$, then $|\U'|=\Omega(\delta^2 p^D)$.

\end{proposition}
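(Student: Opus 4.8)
The plan is to prove Proposition~\ref{U^1 control of G} by a \emph{degree-lowering} argument in the spirit of \cite{peluse_2019}, using Corollary~\ref{Gowers norm control of twisted operators} as the source of Gowers-norm control and an induction that reduces the degree $s$ of the controlling norm $U^s(\v_m)$ down to $1$. The first step is to observe that, by definition of the Gowers norm along a vector, $\|G_{m,t}\|_2^2 = \EE_\x G_{m,t}(\x)\overline{G_{m,t}(\x)}$, and unfolding the two copies of the average over $y$ (one in variable $y$, one in variable $y'=y+k$) gives
\begin{align*}
    \|G_{m,t}\|_2^2 = \EE_{\x, y, k}\prod_{i=1}^m f_i(\x+\v_i P_i(y))\overline{f_i(\x+\v_i P_i(y+k))}\prod_{i=m+1}^t e_p(\phi_i(\x)\partial_{-k}P_i(y))1_\U(\x),
\end{align*}
so that the left-hand side \emph{is} a twisted counting operator of exactly the shape controlled by Corollary~\ref{Gowers norm control of twisted operators} (with $f_0 = 1_\U$, $g_i = f_i$). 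Hence $\|G_{m,t}\|_2^2 \ll \|f_m\|_{U^s(\v_m)}^{c} + p^{-c}$ for some $s$ and $c$ depending only on the data. The degree-lowering step then upgrades this to control by $\|f_m\|_{U^1(\v_m)}$, i.e. it shows that if $\|G_{m,t}\|_2$ is not already $O(p^{-c})$ then $\|f_m\|_{U^s(\v_m)}$ controlling the operator can be replaced by $\|f_m\|_{U^{s-1}(\v_m)}$, and by iteration by $\|f_m\|_{U^1(\v_m)}$; by symmetry of the roles of the indices $1,\dots,m$ (Corollary~\ref{Gowers norm control of twisted operators} can be set up to single out any one of the $m$ functions, and the $P_i$ have distinct degrees) one gets $\|G_{m,t}\|_2 \leq \min_{1\le i\le m}\|f_i\|_{U^1(\v_i)} + O(p^{-c})$.

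The heart of the degree-lowering is the standard dichotomy argument. Suppose $\|G_{m,t}\|_2^2 \gg \|f_m\|_{U^s(\v_m)}^{c}$ with $s\ge 2$. Using the inverse formula \eqref{U^2 inverse} and the induction formula for $U^s(\v_m)$ (both recorded in Section~2), a large $U^s(\v_m)$ norm of $f_m$ forces the existence, for many $(h_1,\dots,h_{s-2})$, of a phase function $\psi_{\underline h}:\FF_p^D\to\FF_p$ such that $\Delta_{\v_m h_1,\dots,\v_m h_{s-2}}f_m$ correlates with $\x\mapsto e_p(\psi_{\underline h}(\x)\langle\text{coordinate along }\v_m\rangle)$ on many cosets of $V_m$; feeding this linear phase back into the counting operator and applying Corollary~\ref{Gowers norm control of twisted operators} once more (now with these extra multiplicative characters absorbed into the $\phi_i$-type phases, which is exactly why the corollary is stated with arbitrary phase functions $\phi_{m+1},\dots,\phi_t$) shows the $\psi_{\underline h}$ must themselves be ``low-complexity'': they are forced to be controlled by a lower Gowers norm, and a Cauchy--Schwarz / pigeonhole manipulation then converts correlation of the derivatives of $f_m$ with these phases into a bound on $\|f_m\|_{U^{s-1}(\v_m)}$. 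This is the step I expect to be the main obstacle: one must carefully track how the ambiguity in the Fourier transform along $\v_m$ (noted after \eqref{inverse formula for Fourier transform}) interacts with the coset structure, and ensure the phases $\psi_{\underline h}$ genuinely depend only on the coset $\x+V_m$ so that they can be legitimately inserted into the operator without destroying the product structure; the bookkeeping with the error terms $O(p^{-c})$ through each of the finitely many iterations (with $c$ shrinking but staying bounded below by a constant depending only on $D,t,\deg P_i$) also needs care.

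Once $\|G_{m,t}\|_2^2 \ll \|f_m\|_{U^1(\v_m)}^{c} + p^{-c}$ is established, and symmetrically for each $i$, the first displayed conclusion follows by an asymptotic-expansion argument: write $f_i = \EE(f_i|V_i) + (f_i - \EE(f_i|V_i))$ for each $i$, where the ``error'' part has $\|\cdot\|_{U^1(\v_i)} = 0$ since its average on every coset of $V_i$ vanishes. Expanding the product in $\|G_{m,t}\|_2^2$ over these $2^m$ choices, every term containing at least one error factor is, by the bound just proved (applied with that factor in the distinguished slot), $O(p^{-2c})$; the single surviving main term is
\begin{align*}
    \EE_{\x, y, k}\prod_{i=1}^m \EE(f_i|V_i)(\x+\v_i P_i(y))\overline{\EE(f_i|V_i)(\x+\v_i P_i(y+k))}\prod_{i=m+1}^t e_p(\phi_i(\x)\partial_{-k}P_i(y))1_\U(\x).
\end{align*}
Since $\EE(f_i|V_i)$ is $V_i$-measurable, $\EE(f_i|V_i)(\x+\v_i P_i(y))=\EE(f_i|V_i)(\x)$ is independent of $y$, so the $y,k$-average decouples: the product of $f_i$-factors becomes $\bigl|\prod_{i=1}^m\EE(f_i|V_i)(\x)\bigr|^2$, and the phase factors contribute $\EE_{y,k}\prod_{i=m+1}^t e_p(\phi_i(\x)\partial_{-k}P_i(y))$, which by Lemma~\ref{Weyl sums} (applied in $y$, since $\partial_{-k}P_i$ has degree $\deg P_i - 1 \geq 0$ and, for $k\ne 0$, is nonconstant precisely when $\deg P_i\ge 2$, the degree-$1$ case handled directly) equals $1_{\phi_{m+1}(\x)=\dots=\phi_t(\x)=0} + O(p^{-1/2})$. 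This yields the claimed identity with $\U'$ as defined, up to $O(p^{-2c})$ after shrinking $c$. The two ``in particular'' clauses are immediate corollaries: dropping $1_{\U'}\le 1$ and $|\EE(f_j|V_j)|\le 1$ for $j\ne i$ gives $\|G_{m,t}\|_2^2 \le \|\EE(f_i|V_i)\|_2^2 + O(p^{-2c}) = \|f_i\|_{U^1(\v_i)}^2 + O(p^{-2c})$, hence $\|G_{m,t}\|_2 \le \|f_i\|_{U^1(\v_i)} + O(p^{-c})$ and one takes the minimum over $i$; and if $\|G_{m,t}\|_2\ge\delta\gg p^{-c}$ then the identity forces $\EE_\x\bigl|\prod_i\EE(f_i|V_i)(\x)\bigr|^2 1_{\U'}(\x) \gg \delta^2$, and since the integrand is $1$-bounded this requires $|\U'| \gg \delta^2 p^D$.
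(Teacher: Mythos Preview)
Your final paragraph --- the decomposition $f_i = \EE(f_i|V_i) + (f_i - \EE(f_i|V_i))$, the Weyl-sum evaluation of the residual phase average, and the two ``in particular'' deductions --- is correct and matches the paper exactly. The gap is in the middle, and it is twofold.

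First, you propose to degree-lower the control by $\|f_m\|_{U^s(\v_m)}$ to control by $\|f_m\|_{U^{s-1}(\v_m)}$. But $f_m$ is an \emph{arbitrary} $1$-bounded function; there is no mechanism by which ``$\|f_m\|_{U^s(\v_m)}$ large $\Rightarrow$ phases $\psi_{\underline h}$ exist $\Rightarrow$ feed back into the operator'' produces a bound involving $\|f_m\|_{U^{s-1}(\v_m)}$. Degree-lowering only works when the function whose Gowers norm one is lowering has an \emph{averaged} structure, so that expanding its multiplicative derivatives yields a counting operator to which an inductive hypothesis applies. Second, your claim that ``by symmetry Corollary~\ref{Gowers norm control of twisted operators} can be set up to single out any one of the $m$ functions'' is false in this multidimensional setting: the PET argument underlying Proposition~\ref{general Gowers norm control} and Corollary~\ref{Gowers norm control of twisted operators} only controls the operator by the Gowers norm (along $\v_m$) of the function sitting at the \emph{highest-degree} polynomial. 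The paper flags this explicitly in its outline (Section~3): in the $D>1$ case one can only bound the operator by a Gowers norm of $f_t$ (resp.\ $f_m$), not by each $f_i$.

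The paper resolves both issues at once by passing to the \emph{dual function} $F_{m,t}$: one writes $\|G_{m,t}\|_2^2 = \langle F_{m,t}, f_m\rangle$ (after shifting $\x\mapsto\x-\v_m P_m(y+k)$), applies Cauchy--Schwarz to get $\|G_{m,t}\|_2^4 \le \|F_{m,t}\|_2^2$, recognises $\|F_{m,t}\|_2^2$ as a twisted counting operator with $F_{m,t}$ itself in the top slot, and then applies Corollary~\ref{Gowers norm control of twisted operators} to bound this by $\|F_{m,t}\|_{U^s(\v_m)}^{c_0}+O(p^{-c_0})$. Because $F_{m,t}$ \emph{is} an average over $y,k$, its $U^s(\v_m)$ norm can genuinely be degree-lowered (Lemma~\ref{degree lowering of the dual}): unfolding the derivatives of $F_{m,t}$ produces an expression of the form $\|G_{m-1,t}\|_2^2$ (with one extra phase $\psi_m$), to which the $(m-1,t)$ case of the Proposition applies and forces $\psi_m$ to have low rank, hence $\|F_{m,t}\|_{U^{s-1}(\v_m)}$ is large. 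Finally $\|F_{m,t}\|_{U^1(\v_m)}$ is bounded by \emph{every} $\|f_i\|_{U^1(\v_i)}$ (Lemma~\ref{U^1 control of F}, via the $(m-1,t-1)$ case of the Proposition), which is how the control by all $m$ functions is obtained despite the asymmetry of the PET bound. The argument is thus an induction on $m$, not a direct degree-lowering on $f_m$.
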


We prove Proposition \ref{U^1 control of G} by induction on $m$. We start with the base case $m=0$. If $t=0$, then the statement is trivially true, otherwise it follows from Lemma \ref{Weyl sums}. The proof for $m\in\NN_+$ requires several technical lemmas which concern the properties of the dual function $F_{m,t}$.

\begin{lemma}\label{degree lowering of the dual}
Let $m\geq 1$, ${f_1, ..., f_m:\FF_p^D\to\CC}$ be 1-bounded, $\phi_{m+1}, ..., \phi_t:\FF_p^D\to\FF_p$ and $\U\subseteq\FF_p^D$. Let 
\begin{align*}
    F_{m,t}(\x) = &\EE_{y,k} \left(\prod_{i=1}^{m-1}f_i(\x+\v_i P_i(y)-\v_m P_m(y+k))\overline{f_i(\x+\v_i P_i(y+k)-\v_m P_m(y+k))}\right)\\
    &f_m(\x+\v_m (P_m(y)- P_m(y+k)))\left(\prod_{i=m+1}^t e_p(\phi_i(\x -\v_m P_m(y+k)) \partial_k P_i(y))\right)1_\U(\x-\v_m P_m(y+k)).
\end{align*}
For each integer $s>1$, there exists $c>0$ independent of the choice of functions $f_i, \phi_i$ and the set $\U$, for which
\begin{align*}
    \|F_{m,t}\|_{U^s(\v_m)}\ll \|F_{m,t}\|_{U^{s-1}(\v_m)}^{c} + p^{-c}. 
\end{align*}
\end{lemma}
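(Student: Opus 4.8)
The plan is to run the standard degree-lowering argument of Peluse--Prendiville, adapted to Gowers norms along a vector $\v_m$. First I would unpack the definition of the $U^s(\v_m)$ norm using the induction property
\begin{align*}
    \|F_{m,t}\|_{U^s(\v_m)}^{2^s} = \EE_{h_2, \dots, h_s} \|\Delta_{\v_m h_2, \dots, \v_m h_s} F_{m,t}\|_{U^1(\v_m)}^{2},
\end{align*}
and recall that $\|g\|_{U^1(\v_m)}^2 = \EE_\x |\widehat{g}(\x; \v_m; 0)|^2$. So it suffices to understand, for a typical choice of $\underline{h} = (h_2, \dots, h_s)$, the quantity $\EE_\x |\widehat{G_{\underline h}}(\x; \v_m; 0)|^2$ where $G_{\underline h} = \Delta_{\v_m h_2, \dots, \v_m h_s} F_{m,t}$. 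The key structural point is that $F_{m,t}$ is itself a dual function: it has the shape $\EE_{y,k}(\cdots)$ with the $\x$-dependence entering through translates $\x + \v_i(\cdots) - \v_m P_m(y+k)$ and through the phases/indicator evaluated at $\x - \v_m P_m(y+k)$. Consequently, after expanding the multiplicative derivatives $\Delta_{\v_m h_j}$ and the Fourier coefficient at frequency $0$ (which is just an average over a new variable along $\v_m$), the expression $\EE_\x |\widehat{G_{\underline h}}(\x;\v_m;0)|^2$ becomes a counting operator of exactly the type controlled by Corollary 4.3 (``Gowers norm control of twisted operators''): an average over $\x$, several copies of $y$ and $k$ variables, and shift variables along $\v_m$, of a product of $f_i$'s composed with the polynomials $P_i$, twisted by phases $e_p(\phi_i(\cdot)\partial_k P_i(\cdot))$ and an indicator of $\U$.

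The heart of the argument is then a \emph{dichotomy via Corollary 4.3}. For each fixed $\underline h$, write $\phi_{\underline h}(\x)$ for the frequency achieving $\max_k |\widehat{G_{\underline h}}(\x;\v_m;k)|$ as in (\ref{U^2 inverse}); more precisely I would first use the $U^2(\v_m)$-type bound
\begin{align*}
    \|G_{\underline h}\|_{U^2(\v_m)}^4 \leq \EE_\x |\widehat{G_{\underline h}}(\x;\v_m;\phi_{\underline h}(\x))|^2
\end{align*}
to reduce to controlling $\EE_\x |\widehat{G_{\underline h}}(\x;\v_m;\phi_{\underline h}(\x))|^2$, and observe that pulling out the phase $e_p(-\phi_{\underline h}(\x) n)$ and unfolding $G_{\underline h}$ turns this into a twisted counting operator with an extra phase function depending on $\phi_{\underline h}$. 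Applying Corollary 4.3 to this operator gives
\begin{align*}
    \EE_\x |\widehat{G_{\underline h}}(\x;\v_m;\phi_{\underline h}(\x))|^2 \ll \|f_m^{(\text{something})}\|_{U^{s'}(\v_m)}^{c_0} + p^{-c_0}
\end{align*}
for some fixed $s'$ and $c_0>0$ — but crucially, because of the structure of $F_{m,t}$, the relevant function on the right is a shift/derivative of $F_{m,t}$ at a \emph{lower} Gowers degree, or more directly one re-expresses the bound as $\|F_{m,t}\|_{U^{s-1}(\v_m)}$. This is where the bookkeeping must be done carefully: one needs to check that the iterated Cauchy--Schwarz built into both the $U^s$ definition and Corollary 4.3 lines up so that the output is genuinely the $U^{s-1}(\v_m)$ norm of $F_{m,t}$ (with a power loss and a $p^{-c}$ error), and that for the exceptional small fraction of $\underline h$ where the polynomial genericity fails one simply absorbs the contribution into the $p^{-c}$ term.

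Finally I would combine: averaging the per-$\underline h$ bound over $\underline h$, using H\"older to pull the average inside the appropriate power, and invoking the monotonicity $\|\cdot\|_{U^{s-1}(\v_m)} \leq \|\cdot\|_{U^{s}(\v_m)}$ together with $1$-boundedness of $F_{m,t}$ (so all norms are $\leq 1$ and powers behave well), one arrives at
\begin{align*}
    \|F_{m,t}\|_{U^s(\v_m)} \ll \|F_{m,t}\|_{U^{s-1}(\v_m)}^{c} + p^{-c}
\end{align*}
for a possibly smaller $c>0$. The main obstacle I anticipate is the second paragraph's step: correctly identifying, after expanding the multiplicative derivatives and the order-$0$ Fourier coefficient, that the resulting operator falls under the hypotheses of Corollary 4.3 (in particular that the polynomials $P_i$ and their $\partial_k$-translates still satisfy the degree conditions needed there, and that the phase functions $\phi_i$ composed with translations along $\v_m$ are still admissible phase functions), and then matching the output norm back to $\|F_{m,t}\|_{U^{s-1}(\v_m)}$ rather than to some a priori different dual function. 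Handling the case $\deg P_m = 1$ (hence $m=1$) separately, as in the proof of Corollary 4.3, will also be needed since the niceness/genericity arguments there require $\deg P_m \geq 2$.
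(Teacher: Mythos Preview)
There is a genuine gap in your plan: the key mechanism that lowers the degree is missing. Applying Corollary~\ref{Gowers norm control of twisted operators} to the unfolded expression will bound it by a Gowers norm of one of the \emph{component functions} $f_i$ (or a product of shifts of $f_i$), not by $\|F_{m,t}\|_{U^{s-1}(\v_m)}$. There is no automatic ``re-expression'' that converts a $U^{s'}(\v_m)$ norm of some $f_m$-like function into a lower-degree norm of the dual $F_{m,t}$; this is precisely the step you flag as ``where the bookkeeping must be done carefully'', but the bookkeeping cannot succeed as stated because the output of Corollary~\ref{Gowers norm control of twisted operators} is simply the wrong object.

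What the paper actually does is quite different. After passing to the maximizing frequency $\phi_m(\x;\h)$ via (\ref{U^2 inverse}) and expanding, it applies Cauchy--Schwarz $s-2$ times to align variables and arrives at an operator of the form $\|G_{m-1,t}\|_2^2$, where the maximizing frequency $\phi_m(\x;\h)$ now plays the role of the $m$-th phase function $\psi_m$. The crucial input is then the \emph{inductive hypothesis}, namely the $(m-1,t)$ case of Proposition~\ref{U^1 control of G}, which forces $\psi_m(\x;\h,\h')=0$ on a large set. This yields an identity expressing $\phi_m(\x;\h')$ as a sum $\sum_i \eta_i(\x;\h,\h')$ where each $\eta_i$ is independent of $h_i'$; it is this low-complexity structure of the frequency, removed by a final round of Cauchy--Schwarz, that produces $\|F_{m,t}\|_{U^{s-1}(\v_m)}$ as an upper bound. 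Your plan omits both the use of Proposition~\ref{U^1 control of G} for $m-1$ and the linear-structure decomposition of the frequency, which together are the heart of the degree-lowering argument.
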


Lemma \ref{degree lowering of the dual} plays an analogous role in our argument to Proposition 6.6 in \cite{peluse_prendiville_2019} and Lemma 8 in \cite{kuca_2020a} in corresponding papers. Multiple applications of Lemma \ref{degree lowering of the dual} and the H\"{o}lder inequality give the following corollary.
\begin{lemma}\label{full degree lowering of F}
Let $F_{m,t}$ be as in Lemma \ref{degree lowering of the dual}. For every $s\in\NN_+$, there exists a constant $c>0$ independent of the choice of functions $f_i, \phi_i$ and the set $\U$, for which
\begin{align*}
    \|F_{m,t}\|_{U^s(\v_m)}\ll \|F_{m,t}\|_{U^1(\v_m)}^{c} + p^{-c}. 
\end{align*}
\end{lemma}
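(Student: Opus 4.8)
The plan is to iterate Lemma \ref{degree lowering of the dual} from the given degree $s$ down to $U^1(\v_m)$, chaining the inequalities together and collecting a single final exponent and error term. Concretely, I would argue by downward induction on $s$: Lemma \ref{degree lowering of the dual} immediately handles the step from $s$ to $s-1$ whenever $s>1$, yielding $\|F_{m,t}\|_{U^s(\v_m)}\ll \|F_{m,t}\|_{U^{s-1}(\v_m)}^{c_s} + p^{-c_s}$ for some $c_s\in(0,1]$ depending only on $s$ (and the ambient data). For $s=1$ there is nothing to prove. So the only real content is to show that a finite chain of such estimates composes into an estimate of the same shape with a single constant $c>0$.

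The key step is the following elementary composition principle: if $X, Y$ are nonnegative reals with $X\ll Y^{a}+p^{-a}$ and $Y\ll Z^{b}+p^{-b}$ for constants $a,b\in(0,1]$, then $X\ll Z^{ab}+p^{-c'}$ for some $c'>0$ depending only on $a,b$. Indeed, substituting and using $(u+v)^{a}\ll u^{a}+v^{a}$ for $a\in(0,1]$ gives $X\ll (Z^{b}+p^{-b})^{a}+p^{-a}\ll Z^{ab}+p^{-ab}+p^{-a}\ll Z^{ab}+p^{-\min(ab,a)}$, and $\min(ab,a)=ab$ since $b\le 1$. Applying this inductively to the chain
\[
\|F_{m,t}\|_{U^s(\v_m)}\ll \|F_{m,t}\|_{U^{s-1}(\v_m)}^{c_s}+p^{-c_s}\ll \cdots \ll \|F_{m,t}\|_{U^{1}(\v_m)}^{c_s c_{s-1}\cdots c_2}+p^{-c}
\]
produces the claimed bound with $c = \min(c_2 c_3\cdots c_s, c_2, c_2 c_3, \ldots)$, which is a positive constant depending only on $s$ and the fixed data $D,t,\v_i,P_i$. (Note $F_{m,t}$ is bounded by $1$ in absolute value — it is an average of $1$-bounded terms — so all the Gowers norms in the chain lie in $[0,1]$ and no issues arise from large values.) Since each $c_s$ from Lemma \ref{degree lowering of the dual} is independent of the functions $f_i$, the phases $\phi_i$ and the set $\U$, so is the resulting $c$, as required.

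I do not anticipate a genuine obstacle here: Lemma \ref{degree lowering of the dual} has already done the hard work of the degree-lowering step, and this statement is a routine packaging of finitely many applications of it together with the monotonicity of Gowers norms along $\v_m$ (which ensures, e.g., that the intermediate quantities are the norms of the same function and that no spurious constants accumulate beyond those from the lemma). The only point requiring a modicum of care is the bookkeeping of exponents and the use of the subadditivity inequality $(u+v)^{a}\ll_a u^{a}+v^{a}$ for $a\in(0,1]$ to keep every intermediate bound in the normal form $(\cdot)^{(\cdot)}+p^{-(\cdot)}$; this is entirely standard.
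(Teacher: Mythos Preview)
Your proposal is correct and matches the paper's own proof almost exactly: both proceed by induction on $s$, invoking Lemma \ref{degree lowering of the dual} for the step $s\to s-1$ and then composing with the induction hypothesis via the elementary inequality $(u+v)^a\ll u^a+v^a$ (which the paper phrases as an application of the H\"older inequality) to obtain the final exponent $c=c_0c_1$. The only superfluous remark is your appeal to monotonicity of the $U^s(\v_m)$ norms, which plays no role in either argument.
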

\begin{proof}
The statement is trivially true for $s=1$, so suppose that $s>1$. Applying Lemma \ref{degree lowering of the dual}, we obtain that 
\begin{align}\label{bound on F, 1}
    \|F_{m,t}\|_{U^s(\v_m)}\ll \|F_{m,t}\|_{U^{s-1}(\v_m)}^{c_0} + p^{-c_0}. 
\end{align}
By induction hypothesis, there exists $c_1>0$ for which 
\begin{align}\label{bound on F, 2}
    \|F_{m,t}\|_{U^s(\v_m)}\ll \|F_{m,t}\|_{U^1(\v_m)}^{c_1} + p^{-c_1}. 
\end{align}
Combining (\ref{bound on F, 1}) and (\ref{bound on F, 2}) with the H\"{o}lder inequality, we get the result with $c = c_0 c_1$. 

\end{proof}

Finally, we show that the $\|F_{m,t}\|_{U^1(\v_m)}$ norm is bounded by the norms $\|f_1\|_{U^1(\v_1)}$, ..., $\|f_m\|_{U^1(\v_m)}$, a result analogous to Lemma 9 of \cite{kuca_2020a}. 
\begin{lemma}\label{U^1 control of F}
Let $F_{m,t}$ be as in Lemma \ref{degree lowering of the dual}. There exists a constant $c>0$ independent of the choice of functions $f_i, \phi_i$ and the set $\U$, for which $\|F_{m,t}\|_{U^1(\v_m)}\leq\min\limits_{1\leq i\leq m}\|f_i\|_{U^1(\v_i)} + O(p^{-c})$.
\end{lemma}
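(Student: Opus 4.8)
The plan is to expand the $U^1(\v_m)$ norm of the dual function $F_{m,t}$ directly from the definition, recalling that $\|F_{m,t}\|_{U^1(\v_m)}^2 = \EE_{\x}|\EE(F_{m,t}|V_m)(\x)|^2 = \EE_{\x,h}F_{m,t}(\x)\overline{F_{m,t}(\x+\v_m h)}$, and then to identify the resulting average (after a suitable change of variables absorbing the $\v_m P_m(y+k)$ shifts into $\x$) as a counting operator of exactly the form to which Corollary \ref{Gowers norm control of twisted operators} applies. Concretely, after unpacking both copies of $F_{m,t}$ and translating $\x \mapsto \x + \v_m P_m(y+k)$ in the outer $\x$-average, the shift structure collapses so that the $h$-variable plays the role of a new difference variable and the expression becomes (up to conjugations and relabelling) an average of the shape
\begin{align*}
    \EE_{\x,y,k}\, g_0(\x)\prod_{i=1}^{m} f_i(\x+\v_i P_i(y))\,\overline{f_i(\x+\v_i P_i(y+k))}\prod_{i=m+1}^{t} e_p(\phi_i(\x)\partial_k P_i(y)),
\end{align*}
where $g_0$ collects the remaining $h$-dependent and $\U$-dependent factors into a single $1$-bounded function of $\x$. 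Corollary \ref{Gowers norm control of twisted operators} then gives $\|F_{m,t}\|_{U^1(\v_m)}^2 \ll \|f_m\|_{U^s(\v_m)}^c + O(p^{-c})$ for some $s, c$; but I actually want a $U^1$-bound, not $U^s$, so this route alone is not quite enough.

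To get the clean bound by $\|f_i\|_{U^1(\v_i)}$ I would instead exploit the symmetry of $F_{m,t}$ in the indices $1, \dots, m$ together with the $U^1(\v)=\|\EE(\cdot|V)\|_2$ identity. The point is that for each fixed $i$, the contribution of $f_i$ enters $F_{m,t}$ only through translates along $\v_i$, so after the change of variables the inner average over the relevant combination of $y,k$ (and the auxiliary variables) factors through $\EE(f_i|V_i)$; one then bounds everything else by $1$ and applies Cauchy--Schwarz to convert $\EE_\x |\EE(f_i|V_i)(\x)|$ into $\|\EE(f_i|V_i)\|_2 = \|f_i\|_{U^1(\v_i)}$. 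Doing this for the index $i$ achieving the minimum, and being careful that the substitutions introduce only a $O(p^{-c})$ error (from the non-injectivity of the change of variables when $P_m(y+k)$ is not generic, handled exactly as in the pigeonhole step of Corollary \ref{Gowers norm control of twisted operators}), yields $\|F_{m,t}\|_{U^1(\v_m)} \leq \min_{1\le i\le m}\|f_i\|_{U^1(\v_i)} + O(p^{-c})$.

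The main obstacle I anticipate is bookkeeping the change of variables $\x \mapsto \x + \v_m P_m(y+k)$ cleanly: after this translation the functions $f_i$ for $i<m$ appear with argument $\x + \v_i P_i(y) - \v_m P_m(y+k) + \v_m P_m(y+k) = \x + \v_i P_i(y)$ in one copy but the $\x+\v_m h$ copy shifts the $\v_m P_m$ term differently, so one must check that the $h$-variable can be renamed so that all the $P_m$-shifts line up and the phases $\phi_i$ and the cutoff $1_\U$ reassemble into a genuine $1$-bounded function of the (new) base point $\x$ alone. The distinct-degree hypothesis $0 < \deg P_1 < \dots < \deg P_t$ and the resulting niceness of the underlying polynomial family are what guarantee the reassembled operator is of the form covered by Corollary \ref{Gowers norm control of twisted operators}; verifying this is the only genuinely non-routine point, and it mirrors the corresponding verification already carried out there.
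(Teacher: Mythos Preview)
Your second paragraph contains a genuine gap. The claim that ``for each fixed $i$, the contribution of $f_i$ enters $F_{m,t}$ only through translates along $\v_i$'' is false for $i<m$: in $F_{m,t}$ the function $f_i$ appears with argument $\x+\v_i P_i(y)-\v_m P_m(y+k)$, which involves both $\v_i$ and $\v_m$. Even after you translate $\x\mapsto \x+\v_m P_m(y+k)$ (or equivalently introduce an averaging variable $n$ along $V_m$), the argument of $f_i$ becomes $\x+\v_i P_i(y)+\v_m n$, and the variables $y,k,n$ are still shared with all the other $f_j$ and with the phases $e_p(\phi_j(\cdot)\partial_k P_j(y))$. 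You cannot ``bound everything else by $1$'' inside the absolute value without destroying the cancellation in the $y$-average, so there is no way to force the expression to factor through $\EE(f_i|V_i)$ directly. There is also no symmetry in the indices $1,\dots,m$: the index $m$ is distinguished (its shift is purely along $\v_m$), so a uniform argument over $i$ is not available.

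What the paper does, and what your plan is missing, is the \emph{inductive} structure with Proposition~\ref{U^1 control of G}. After writing $\EE_{\x+V_m}F_{m,t}$ and absorbing the $\v_m P_m(y+k)$ shift into the $n$-variable, one applies Cauchy--Schwarz in $k$; this eliminates $f_m$ entirely (since its argument no longer depends on $k$) and leaves precisely $\|G_{m-1,t-1}\|_2^2$ with the roles of $P_m,\dots,P_t$ shifted. The $(m-1,t-1)$ case of Proposition~\ref{U^1 control of G} then gives the bound $\min_{1\le i\le m-1}\|f_i\|_{U^1(\v_i)}+O(p^{-c})$. The remaining bound by $\|f_m\|_{U^1(\v_m)}$ requires a separate argument: decompose each $f_j$ for $j<m$ as $\EE(f_j|V_j)+(f_j-\EE(f_j|V_j))$, use the bound just established to discard the second pieces, and then what is left has only $f_m$ appearing nontrivially, at which point Fourier analysis along $\v_m$ and Lemma~\ref{Weyl sums} finish the job. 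Your proposal never invokes the $(m-1)$-case of Proposition~\ref{U^1 control of G}, and without it the argument cannot close.
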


Our induction scheme works as follows. For $m\in\NN$, the $(m,t)$ case of Proposition \ref{U^1 control of G} is used to prove the $(m+1, t)$ cases of Lemmas \ref{degree lowering of the dual} and \ref{full degree lowering of F} as well as the $(m+1, t+1)$ case of Lemma \ref{U^1 control of F}. It follows that once the $(m,t)$ cases of Proposition \ref{U^1 control of G} are proved for all $t\geq m$, the $(m+1, t)$ cases of Lemmas \ref{degree lowering of the dual}, \ref{full degree lowering of F} and \ref{U^1 control of F} are proved for all $t\geq m+1$. The $(m+1,t)$ case of Proposition \ref{U^1 control of G} is then derived with the help of the $(m+1, t)$ cases of Lemmas \ref{full degree lowering of F} and \ref{U^1 control of F}.


\begin{proof}[Proof of Proposition \ref{U^1 control of G} in the case $m\geq 1$]
We recall that
\begin{align*}
    \|G_{m,t}\|^2_{2} = 
    \EE_{\x, y,k}\prod_{i=1}^{m}f_i(\x+\v_i P_i(y))\overline{f_i(\x+\v_i P_i(y+k))}\prod_{i = m+1}^t e_p(\phi_i(\x)\partial_k P_i(y)) 1_\U(\x).
\end{align*}

Translating $\x\mapsto \x-\v_m P_m(y+k)$, we observe that
\begin{align}\label{G as an inner product}
    \|G_{m,t}\|^2_{2} = \langle F_{m,t}, f_m\rangle,
\end{align}
where
\begin{align*}
    F_{m,t}(\x) = &\EE_{y,k} \left(\prod_{i=1}^{m-1}f_i(\x+\v_i P_i(y)-\v_m P_m(y+k))\overline{f_i(\x+\v_i P_i(y+k)-\v_m P_m(y+k))}\right)\\
    &f_m(\x+\v_m (P_m(y)- P_m(y+k)))
    \prod_{i = m+1}^t e_p(\phi_i(\x-\v_m P_m(y+k))\partial_k P_i(y))1_\U(\x-\v_m P_m(y+k)).
\end{align*}
is as in the statement of Lemma \ref{degree lowering of the dual}.
Applying the Cauchy-Schwarz inequality to (\ref{G as an inner product}), we obtain
\begin{align*}
    \|G_{m,t}\|^4_{2} \leq \|F_{m,t}\|^2_{2} = \EE_{\x, y,k}\prod_{i=1}^{m-1}f_i(\x+\v_i P_i(y))\overline{f_i(\x+\v_i P_i(y+k))}\\
    f_m(\x+\v_m P_m(y))\overline{F_{m,t}(\x+\v_m P_m(y+k))}\prod_{i = m+1}^t e_p(\phi_i(\x)\partial_k P_i(y)) 1_\U(\x).
\end{align*}
We thank Sean Prendiville for showing us the trick that we have just used to bound $\|G_{m,t}\|_{2}$ in terms of $\|F_{m,t}\|_{2}$. 

By Corollary \ref{Gowers norm control of twisted operators} applied to the sum above, there exists $s\in\NN_+$ and $0<c_0<1$, independent from the choice of $f_1, ..., f_m, \phi_{m+1}, ..., \phi_t, \U$, such that 
\begin{align}\label{bound 1 on G}
    \|G_{m,t}\|^4_{2} \leq\|F_{m,t}\|_{U^s(\v_m)}^{c_0} + O(p^{-c_0}).
\end{align}

We then apply Lemma \ref{full degree lowering of F} and Lemma \ref{U^1 control of F}, to bound 
\begin{align}\label{bound 2 on G}
    \|F_{m,t}\|_{U^s(\v_m)}\ll \min\limits_{1\leq i\leq m}\|f_i\|_{U^1(\v_i)}^{c_1} + p^{-c_1}
\end{align}
for some $0<c_1<1$. Combining (\ref{bound 1 on G}) and (\ref{bound 2 on G}), letting $c = c_0 c_1/4$ and using the H\"{o}lder inequality, we get the bound
\begin{align}\label{bound on G, 2}
    \|G_{m,t}\|^2_{2}\ll \min\limits_{1\leq i\leq m}\|f_i\|_{U^1(\v_i)}^{2c} + p^{-2c}.
\end{align}

Splitting each $f_1, ..., f_{m}$ into $f_i = \EE(f_i|V_i) + (f_i-\EE(f_i|V_i))$, observing that $\EE(f_i|V_i)(\x+\v_i P_i(y)) = \EE(f_i|V_i)(\x)$ and using the bound (\ref{bound on G, 2}) as well as the identity $\|f_i - \EE(f_i|V_i)\|_{U^1(\v_i)} = \|\EE(f_i - \EE(f_i|V_i)|V_i)\|_{2} = 0$, we deduce that
\begin{align*}
    \|G_{m,t}\|_{2}^2 &= \EE_{\x}\left|\EE_y\prod_{i=1}^{m} \EE(f_i|V_i)(\x+\v_i P_i(y)) \prod_{i=m+1}^t e_p(\phi_i(\x)P_i(y))\right|^2  1_{\U}(\x)+ O(p^{-2c})\\
    &= \EE_{\x}\left|\prod_{i=1}^{m} \EE(f_i|V_i)(\x)\right|^2 \cdot\left|\EE_y \prod_{i=m+1}^t e_p(\phi_i(\x)P_i(y))\right|^2  1_{\U}(\x)+ O(p^{-2c})
\end{align*}
As a consequence of Lemma \ref{Weyl sums} applied to the inner average over $y$, we obtain
\begin{align*}
    \|G_{m,t}\|_{2}^2 &= \EE_{\x}\prod_{i=1}^{m} |\EE(f_i|V_i)(\x)|^2  1_{\U'}(\x)+ O(p^{-2c})
\end{align*}
for 
\begin{align*}
    \U'=\{\x\in \U: \phi_{m+1}(\x) = ... = \phi_t(\x)= 0\}.
\end{align*}

It follows from the 1-boundedness of $f_1, ..., f_m$ that $|\U'|\geq \delta^2 p^D$ whenever $\|G_{m,t}\|_{2}\geq \delta \gg p^{-c}$. The 1-boundedness of $f_1, ..., f_m$ and the H\"{o}lder inequality further imply that
\begin{align*}
    \|G_{m,t}\|_{2}\leq \min_{1\leq i \leq m}\|f_i\|_{U^1(\v_i)} + O(p^{-c}).
\end{align*}
\end{proof}

We now proceed to prove Lemma \ref{degree lowering of the dual}, which contains the bulk of the technicalities in this paper. 
\begin{proof}[Proof of Lemma \ref{degree lowering of the dual}]
We recall that
\begin{align*}
    F_{m,t}(\x) = \EE_{y,k} &\left(\prod_{i=1}^{m-1}f_i(\x+\v_i P_i(y)-\v_m P_m(y+k))\overline{f_i(\x+\v_i P_i(y+k)-\v_m P_m(y+k))}\right)\\
    &f_m(\x+\v_m (P_m(y)- P_m(y+k)))\prod_{i=m+1}^t e_p(\phi_i(\x -\v_m P_m(y+k)) \partial_k P_i(y))1_\U(\x-\v_m P_m(y+k)),
\end{align*}
For simplicity, we set $F=F_{m,t}$ and $f_0 = 1_\U$ as well as recall that $\v_0 = \textbf{0}$ and $P_0 = 0$, so that
\begin{align*}
    F(\x) = \EE_{y,k} &\left(\prod_{i=0}^{m-1}f_i(\x+\v_i P_i(y)-\v_m P_m(y+k))\overline{f_i(\x+\v_i P_i(y+k)-\v_m P_m(y+k))}\right)\\
    &f_m(\x+\v_m (P_m(y)- P_m(y+k)))\prod_{i=m+1}^t e_p(\phi_i(\x -\v_m P_m(y+k)) \partial_k P_i(y)).
\end{align*}
We let $\delta = \|F\|_{U^s(\v_m)}$. We also denote $\underline{h}=(h_1, ..., h_{s-2})$ and $\EE_\h = \EE_{\h\in\FF_p^{s-2}}$. From the induction formula for Gowers norms and the inequality (\ref{U^2 inverse}), we deduce that
\begin{align*}
    \delta^{2^s} = \|F\|_{U^s(\v_m)}^{2^s}=\EE_{\h}\|\Delta_{\h \v_m} F\|_{U^2(\v_m)}^4\leq\EE_{\h,\x}|\widehat{\Delta_{\h \v_m} F}(\x; \v_m; \phi_m(\x; \h))|^2
\end{align*}
for some $\phi_m(\x; \h)\in\FF_p$. We can assume that $\phi_m(\x; \h)$ is the same for all $\x$ lying in the same coset of $V_m$ since $|\widehat{\Delta_{\h \v_m} F}(\x; \v_m; k)| = |\widehat{\Delta_{\h \v_m} F}(\x; \v_m; k + n\v_m)|$ for all $k, n\in\FF_p$. Thus, $\phi_m(\cdot; \h)$ is $V_m$-measurable for each fixed $\h$.
We let
\begin{align*}
    H_1=\{\h\in\FF_p^{s-2}: \|\Delta_{\h \v_m} F\|_{U^2(\v_m)}^4 \geq\delta^{2^s}/2\}
\end{align*}
and
\begin{align*}
    \U_\h = \{\x\in\FF_p^D: |\widehat{\Delta_{\h \v_m} F}(\x; \v_m; \phi_m(\x; \h))|^2\geq \delta^{2^s}/4\}.
\end{align*}
It follows from the popularity principle that 
\begin{align*}
    \delta^{2^s} &\ll \EE_{\x,\h} |\widehat{\Delta_{\h \v_m}F}(\x; \v_m; \phi_m(\x;\h))|^2 1_{\U_\h}(\x)1_{H_1}(\h)\\
    &\leq \EE_{\substack{\x,\h,\\ n,n'}}\Delta_{\h\v_m} F(\x+n\v_m)\overline{\Delta_{\h\v_m}F(\x+n'\v_m)}e_p(\phi_m(\x;\h)(n'-n))1_{\U_\h}(\x)1_{H_1}(\h).
\end{align*}
After expanding $\Delta_{\h\v_m} F(\x+n\v_m)$ and $\overline{\Delta_{\h\v_m}F(\x+n'\v_m)}$, the right-hand side of the above equals 
\begin{align*} 
    &\EE_{\x, n, n', \hh}\EE_{\yy,\yy', \kk,\kk'\in\FF_p^{\{0,1\}^{s-2}}}\prod_{\ww\in\{0,1\}^{s-2}} \C^{|w|}\left[\left(\prod_{i=0}^{m-1}f_i(\x+\textbf{v}_i P_i(y_\w)+\textbf{v}_m(n+\w\cdot\h -P_m(y_\w+k_\w)))\right. \right.\\
    &\overline{f_i(\x+\textbf{v}_i P_i(y_\w+k_\w)+\textbf{v}_m(n+\w\cdot\h -P_m(y_\w+k_\w)))}\overline{f_i(\x+\textbf{v}_i P_i(y'_\w)+\textbf{v}_m(n'+\w\cdot\h -P_m(y'_\w+k'_\w)))}\\
    &\left.\vphantom{\prod_{i=1}^{m-1}} f_i(\x+\textbf{v}_i P_i(y'_\w+k'_\w)+\textbf{v}_m(n'+\w\cdot\h -P_m(y'_\w+k'_\w)))\right)\\
    &f_m(\x+\textbf{v}_m(n+\w\cdot\h + P_m(y_\w) -P_m(y_\w+k_\w)))\overline{f_m(\x+\textbf{v}_m(n'+\w\cdot\h + P_m(y'_\w) -P_m(y_\w'+k_\w')))}\left.\vphantom{\prod_{i=1}^{m-1}}\right]\\
    &e_p\left(\phi_m(\x; \h)(n'-n)+\sum\limits_{i=m+1}^t\sum\limits_{\w\in\{0,1\}^{s-2}}(-1)^{|w|}(\phi_i(\x+\textbf{v}_m(n+\w\cdot\h-P_m(y_\w+k_\w)))\partial_{k_\w}P_i(y_\w)\right.\\
    &-\phi_i(\x+\textbf{v}_m(n'+\w\cdot\h-P_m(y'_\w+k'_\w)))\partial_{k'_\w}P_i(y'_\w)\left.\vphantom{\prod_{i=1}^{m-1}}\right)1_{\U_\h}(\x)1_{H_1}(\h).
\end{align*}
It does not suit us that the expression above contains a product of many copies of $f_1, ..., f_m$ whose arguments include different $y, y', k, k'$ variables. We want all the copies of $f_0, ..., f_m$ to be expressed in the same $y, y', k, k'$ variables. We shall achieve this by applying the Cauchy-Schwarz inequality $s-2$ times to the expression above. Letting $\tilde{\h} = (h_1, ..., h_{s-3}, h_{s-2}')$ and applying the Cauchy-Schwarz inequality in all variables except $h_{s-2}$, we bound the expression above by the square root of
\begin{align*}
    &\EE_{\substack{\x, n, n', h_1,\\ ..., h_{s-3}, h_{s-2}, h'_{s-2}}}\EE_{\substack{\yy,\yy', \kk,\\ \kk'\in\FF_p^{\{0,1\}^{s-2}}}}\prod_{\substack{\ww\in\{0,1\}^{s-2},\\ w_{s-2} = 1}} \C^{|w|}\left(\prod_{i=0}^{m}\tilde{f}_i\right)
    e_p\left(  \vphantom{\prod_{i=1}^{m-1}}  (\phi_m(\x; \h)-\phi_m(\x; \tilde{\h}))(n'-n)\right.\\
    &+\sum\limits_{i=m+1}^t\sum\limits_{\substack{\w\in\{0,1\}^{s-2}\\ w_{s-2} = 1}}(-1)^{|w|}((\phi_i(\x+\textbf{v}_m(n+\w\cdot\h-P_m(y_\w+k_\w)))\\
    &-\phi_i(\x+\textbf{v}_m(n+\w\cdot\tilde{\h}-P_m(y_\w+k_\w))))\partial_{k_\w}P_i(y_\w) -(\phi_i(\x+\textbf{v}_m(n'+\w\cdot{\h}-P_m(y_\w+k_\w)))\\
    &-\phi_i(\x+\textbf{v}_m(n'+\w\cdot\tilde{\h}-P_m(y_\w+k_\w))))\partial_{k'_\w}P_i(y'_\w))\left.\vphantom{\prod_{i=1}^{m-1}}\right) 1_{\U_\h}(\x)1_{H_1}(\h)1_{H_1}(\tilde{\h}),
\end{align*}
where 
\begin{align*}
    &\Tilde{f}_i(\x, \y, \k, n, n', \w, h_1, ..., h_{s-3}, h_{s-2}, h_{s-2}')\\
    &= f_i(\x+\v_i P_i(y_\w)+\v_m(n+\w\cdot\h -P_m(y_\w+k_\w)))\overline{f_i(\x+\v_i P_i(y_\w)+\v_m(n+\w\cdot\tilde{\h} -P_m(y_\w+k_\w)))}\\
    &\overline{f_i(\x+\v_i P_i(y_\w+k_\w)+\v_m(n+\w\cdot\h -P_m(y_\w+k_\w)))}f_i(\x+\v_i P_i(y_\w+k_\w)+\v_,(n+\w\cdot\tilde{\h} -P_m(y_\w+k_\w)))\\
    &\overline{f_i(\x+\v_i P_i(y'_\w)+\v_m(n'+\w\cdot\h -P_m(y'_\w+k'_\w)))}f_i(\x+\v_i P_i(y'_\w)+\v_m(n'+\w\cdot\tilde{\h} -P_m(y'_\w+k'_\w)))\\
    &f_i(\x+\v_i P_i(y'_\w+k'_\w)+\v_m(n'+\w\cdot\h -P_m(y'_\w+k'_\w)))\overline{f_i(\x+\v_i P_i(y'_\w+k'_\w)+\v_m(n'+\w\cdot\tilde{\h} -P_m(y'_\w+k'_\w)))}
\end{align*}
for $0\leq i\leq m-1$ and
\begin{align*}
    &\Tilde{f}_m(\x, \y, \k, n, n', \w, h_1, ..., h_{s-3}, h_{s-2}, h_{s-2}')\\
    &= f_m(\x+\v_m(n+\w\cdot\h + P_m(y_\w) - P_m(y_\w+k_\w)))\overline{f_m(\x+\v_m(n+\w\cdot\tilde{\h} + P_m(y_\w) - P_m(y_\w+k_\w)))}\\
    & \overline{f_m(\x+\v_m(n'+\w\cdot\h + P_m(y'_\w) - P_m(y'_\w+k'_\w)))}{f_m(\x+\v_m(n'+\w\cdot\tilde{\h} + P_m(y'_\w) - P_m(y'_\w+k'_\w)))}.
\end{align*}

Applying the Cauchy-Schwarz inequality another $s-3$ times, each time in all variables except $h_{s-3}$ during the first application, $h_{s-4}$ during the second application, etc., we obtain the bound
\begin{align*}
    \delta^{2^{2s-2}} \ll &\EE_{\x, \h,\h'}\left|\EE_{n, y, k}\left(\prod_{i=0}^{m-1} g_i(\x+\textbf{v}_i P_i(y) + \textbf{v}_m n)\overline{ g_i(\x+\textbf{v}_i P_i(y+k) + \textbf{v}_m n)}\right)\right.\\ 
    & g_m(\x+\textbf{v}_m(n+P_m(y)))e_p\left( \vphantom{\prod_{i=1}^{m-1}} \psi_m(\x; \hh,\hh')(n + P_m(y+k))\right.\\
    &+\sum_{i=m+1}^t\psi_i(\x + \textbf{v}_m n; \h, \h')\partial_k P_i(y)\left.\vphantom{\prod_{i=1}^{m-1}}\right)\left.\vphantom{\prod_{i=1}^{m-1}}\right|^2 1_{\U_{(\h,\h')}}(\x) 1_{H_2}(\h,\h')
\end{align*}
where
\[ \hh^{(\ww)}_i =
\begin{cases}
h_i, w_i = 0\\
h'_i, w_i = 1
\end{cases}
\]
\begin{align*}
    g_i(\x):=\prod_{\ww\in\{0,1\}^{s-2}}\C^{|w|}f_i\left(\x+\v_m \underline{1}\cdot\hh^{(\ww)}\right)\quad {\rm{with}}\quad \underline{1}=(1, ..., 1)\in\FF_p^{s-2},
\end{align*}
\begin{align*}
    H_2=\{(\hh,\hh')\in\FF_p^{2(s-2)}:\forall \ww\in\{0,1\}^{s-2}\hh^{(\ww)}\in H_1\} \quad {\rm{and}}\quad \U_{(\h, \h')} = \bigcap_{\w\in\{0,1\}^{s-2}}\U_{\h^{(\w)}},
\end{align*}
\begin{align*}
    \psi_i(\x; \h, \h') =\begin{cases}
    \sum\limits_{\ww\in\{0,1\}^{s-2}}(-1)^{|w|}\phi_m(\x;\hh^{(\ww)})\; &{\rm{for}}\; i=m\\
    \sum\limits_{w\in\{0,1\}^{s-2}}(-1)^{|w|}\phi_i(\x+\v_m\underline{1}\cdot \hh^{(\ww)})\; &{\rm{for}}\; m+1\leq i\leq t.
    \end{cases} 
\end{align*}
Applying the Cauchy-Schwarz inequality in $y, n$ to the expectation inside the absolute value, performing minor changes of variables and recalling that $\phi_m(\cdot, \h)$ and $\U_{(\h,\h')}$ are $V_m$-measurable, we get the bound
\begin{align*}
    \delta^{2^{2s-2}}\ll &\EE_{\x, \h,\h', n, y, k}\left(\prod_{i=0}^{m-1} g_i(\x+\textbf{v}_i P_i(y) + \textbf{v}_m n)\overline{ g_i(\x+\textbf{v}_i P_i(y+k) + \textbf{v}_m n)}\right)\\ 
    &e_p\left(\sum_{i=m}^t\psi_i(\x + \textbf{v}_m n; \h, \h')\partial_k P_i(y)\right) 1_{\U_{(\h,\h')}}(\x)1_{H_2}(\h,\h')\\
    &=\EE_{\x, \h,\h'}\left|\EE_y\prod_{i=0}^{m-1} g_i(\x+\textbf{v}_i P_i(y))e_p\left(-\sum_{i=m}^t\psi_i(\x; \h, \h')P_i(y)\right)\right|^2 1_{\U_{(\h,\h')}}(\x)1_{H_2}(\h,\h').
\end{align*}
We then use the 1-boundedness of $g_0$ and the fact that $g_0(\x + \v_0 P_0(y)) = g_0(\x)$ is independent of $y$ to conclude that
\begin{align*}
    \delta^{2^{2s-2}}\ll \EE_{\x, \h,\h'}\left|\EE_y\prod_{i=1}^{m-1} g_i(\x+\v_i P_i(y))e_p\left(-\sum_{i=m}^t\psi_i(\x; \h, \h')P_i(y)\right)\right|^2
    1_{\U_{(\h,\h')}}(\x)1_{H_2}(\h,\h').
\end{align*}

By the popularity principle, the set 
\begin{align*}
    H_3 = \left\{(\h,\h')\in H_2:\;  \EE_{\x}\left|\EE_y\prod_{i=1}^{m-1} g_i(\x+\v_i P_i(y))e_p\left(-\sum_{i=m}^t\psi_i(\x; \h, \h')P_i(y)\right)\right|^2 1_{\U_{(\h,\h')}}(\x)\gg \delta^{2^{2s-2}}\right\}
\end{align*}
has $\Omega(\delta^{2^{2s-2}} p^{2s - 4})$ elements. In particular, there exists $\h\in \FF_p^{s-2}$ for which the fiber
\begin{align*}
    H_4:=\{\hh': (\hh,\hh')\in H_3\}
\end{align*}
has $\Omega(\delta^{2^{2s-2}} p^{s - 2})$ elements. We fix such $\h$. 


Applying Proposition \ref{U^1 control of G} in the case $(m-1, t)$, we conclude that for each $\h'\in H_4$, the set
\begin{align*}
    \U'_{\h'}=\{\x\in\U_{(\h, \h')}: \psi_m(\x; \h, \h') = 0\}
\end{align*}
has $\Omega(\delta^{2^{2s-2}} p^D)$ elements as long as $\delta\gg p^{-c_1}$ for a constant $c_1>0$ given by the case $(m-1, t)$ of Proposition \ref{U^1 control of G}.

We now show that the phases $\psi_m$ possess some linear structure that we subsequently use to complete the proof. We define
\begin{align*}
    \eta_i(\x; \hh,\hh'):=(-1)^{s-1}\sum_{\substack{\ww\in\{0,1\}^{s-2},\\ w_1 = ... = w_{i-1}=1,\\ w_i = 0}}(-1)^{|w|}\phi(\x; \hh^{(\ww)}),
\end{align*}
so that $$\psi_m(\x; \hh,\hh')=(-1)^s\left(\phi_m(\x; \h')-\eta_1(\x;\hh,\hh')-...-\eta_{s-2}(\x; \hh,\hh')\right).$$ Crucially, $\eta_i$ does not depend on $h'_i$. Thus, $\psi(\x; \hh,\hh') = 0$ implies that
\begin{align*}
    \phi_m(\x; \h')=\sum_{i=1}^{s-2}\eta_i(\x; \hh,\hh').
\end{align*}
That is to say, $\phi_m(\x; \h')$ can be decomposed into a sum of $s-2$ functions, each of which does not depend on $h_i'$ for a different $i$.

We illustrate the aforementioned definitions for $s=3$ and 4. For $s=3$, 
$$\psi_m(
\x; h,h')=\phi_m(\x; h)-\phi_m(\x; h')=\eta_1(\x; h)-\phi_m(\x; h').$$ Hence $\psi_m(\x; h,h')=0$ implies that $\phi_m(\x; h')=\phi_m(\x; h)$. For $s=4$, 
\begin{align*}
    \psi_m(\x; \hh,\hh') &=\phi_m(\x; h_1,h_2)-\phi_m(\x; h_1,h_2')-\phi_m(\x; h_1',h_2)+\phi_m(\x; h_1',h_2')\\
    &=-\eta_1(\x; \hh,\hh')-\eta_2(\x; \hh,\hh')+\phi_m(\x; h_1',h_2')
\end{align*}
 and so $\psi_m(\x; \hh,\hh')=0$ implies that
\begin{align*}
    \phi_m(\x; h_1',h_2') = -\phi_m(\x; h_1,h_2) + \phi_m(\x; h_1,h_2') +\phi_m(\x; h_1',h_2)  = \eta_1(\x; \hh,\hh')+\eta_2(\x; \hh,\hh').
\end{align*}

To bound the $U^s(\v_m)$ norm of $F$ by its $U^{s-1}(\v_m)$ norm, we estimate the expression
\begin{align}\label{Long expression 7}
    \EE_{\hh', \x}\left|\widehat{\Delta_{\hh'\v_m}F}(\x; \v_m; \phi_m(\x;\hh'))\right|^2 1_{\U'_{\h'}}(\x)\1_{H_4}(\hh')
\end{align}
from above and below. For each $\h'\in H_4$ and $\x\in\U'_{\h'}$, we have $\left|\widehat{\Delta_{\hh'\v_m}F}(\x; \v_m; \phi_m(\x;\hh'))\right|^2\gg \delta^{2^s}$. Together with the lower bounds on the size of $H_4$ and $\U'_{\h'}$ whenever $\h'\in H_4$, we deduce that (\ref{Long expression 7}) is bounded from below by $\Omega(\delta^{4^s})$.

The upper bound is more complicated, and it relies on the fact that we can decompose $\phi_m(\hh')$ into a sum of $\eta_i$'s such that $\eta_i$ does not depend on $h'_i$. From the definitions of $H_4$ and $\U'_{\h'}$ it follows that
\begin{align}\label{Long expression 8}
    \nonumber
    &\EE_{\hh', \x}\left|\widehat{\Delta_{\hh'\v_m}F}(\x; \v_m; \phi_m(\x;\hh'))\right|^2 1_{\U'_{\h'}}(\x)\1_{H_4}(\hh')\\
    &=\EE_{\hh', \x}\left|\widehat{\Delta_{\hh'\v_m}F}\left(\x; \v_m; \sum_{i=1}^{s-2}\eta_i(\x;\hh, \hh')\right)\right|^2 1_{\U'_{\h'}}(\x)\1_{H_4}(\hh').
\end{align}
By positivity, we can extend (\ref{Long expression 8}) to the entire $\FF_p^{s-2}$; that is, we have
\begin{align*}
    &\EE_{\hh', \x}\left|\widehat{\Delta_{\hh'\v_m}F}\left(\x; \v_m; \sum_{i=1}^{s-2}\eta_i(\x;\hh, \hh')\right)\right|^2 1_{\U'_{\h'}}(\x)\1_{H_4}(\hh')\leqslant \EE_{\hh', \x} \left|\widehat{\Delta_{\hh'\v_m}F} \left(\x; \v_m; \sum_{i=1}^{s-2}\eta_i(\x;\hh, \hh')\right)\right|^2.
\end{align*}
Rewriting, we obtain that
\begin{align}\label{Long expression 9}
    \nonumber
    \EE_{\hh', \x} \left|\widehat{\Delta_{\hh'\v_m}F} \left(\x; \v_m; \sum_{i=1}^{s-2}\eta_i(\x;\hh, \hh')\right)\right|^2 &= \EE_{\hh', \x}\left|\EE_n\Delta_{\hh'\v_m}F(\x+n\v_m)e_p\left(\sum_{i=1}^{s-2}\eta_i(\x;\h, \hh')n\right)\right|^2\\
    &=\EE_{\hh',\x, n, k} \Delta_{\hh'\v_m, k\v_m} F(\x+n\v_m) e_p\left(\sum_{i=1}^{s-2}\eta_i(\x; \h, \hh')k\right).
\end{align}
We apply the Cauchy-Schwarz inequality $s-2$ times to (\ref{Long expression 9}) to get rid of the phases $\eta_i(\x; \h, \hh')$. In the first application, we apply the inequality in all variables but $h_1'$, thus bounding (\ref{Long expression 9}) by
\begin{align}\label{Long expression 10}
&\EE_{\substack{h_2',..., h'_{s-2},\\ \x, n, k}}\left|\EE_{h'_1}\Delta_{\substack{h'_2\v_m,...,h'_{s-2}\v_m, k\v_m}}F(\x+\v_m(n+h'_1))e_p\left(\sum_{i=2}^{s-2}\eta_i(\x; \h, \hh')k\right)\right|^2\\
\nonumber
&=\EE_{\substack{h_1', h_1'',\\ h_2',..., h'_{s-2},\\ \x, n, k}}\Delta_{h'_2\v_m,...,h'_{s-2}\v_m,k\v_m}\left(F(\x+\v_m(n+ h'_1))\overline{F(\x+\v_m(n + h''_1))}\right)\\
\nonumber
&e_p\left(\sum_{i=2}^{s-2}(\eta_i(\x; \h,\h')-\eta_i(\x; \h, \tilde{\h'}))k\right))^{\frac{1}{2}},
\end{align}
where $\tilde{\h'}=(h_1'', h_2', ..., h_{s-2}')$. After repeatedly applying the Cauchy-Schwarz inequality in this manner, we get rid of all the phases and bound (\ref{Long expression 10}) by $||F||^2_{U^{s-1}(\v_m)}$. Thus, $||F||_{U^{s-1}(\v_m)}\gg\delta^{2^{2s-1}}$ as long as $\delta\gg p^{-c_1}$. Taking $c=\min(c_1, 1/2^{2s-1})$, it follows that
\begin{align*}
    ||F||_{U^{s}(\v_m)}\ll ||F||_{U^{s-1}(\v_m)}^{c} + p^{-c}.
\end{align*}
\end{proof}

\begin{proof}[Proof of Lemma \ref{U^1 control of F}]
We set $F=F_{m,t}$. By definition,
\begin{align*}
    \|F\|_{U^1(\v_m)}^2 = \EE_\x \left|\EE_{\x+V_m}F\right|^2,
\end{align*}
where
\begin{align}\label{average of F on a coset}
    \nonumber
    \EE_{\x+V_m}F = &\EE_{n}F(\x + \v_m n) = \EE_{n, y, k}\left(\prod_{i=1}^{m-1} f_i(\x+ \v_i P_i(y)+\v_m n)\overline{f_i(\x+ \v_i P_i(y+k)+\v_m n)}\right)\\
    &f_m(\x + \v_m(n+P_m(y)))\prod_{i=m+1}^t e_p(\phi_i(\x +\v_m n) \partial_k P_i(y))1_\U(\x+\v_m n).
\end{align}

We first prove the statement when $m=t=1$. In that case,
\begin{align*}
    \|F\|_{U^1(\v_m)}^2 \leq \EE_{\x, n, y, k} \overline{f_m(\x+\v_m (n+P_m(y)))}{f_m(\x+\v_m (n+P_m(y+k)))}.
\end{align*}
Replacing both instances of $f_m$ by their Fourier series along $\v_m$, we observe that
\begin{align*}
    \|F\|_{U^1(\v_m)}^2 &\leq \EE_{\x}\sum_{l_1, l_2}  \overline{\widehat{f_m}(\x; \v_m; l_1)}\widehat{f_m}(\x; \v_m; l_2) \EE_{n, y, k} e_p(n(l_2-l_1) + l_2 P(y+k) - l_1 P(y)).
\end{align*}
Using Lemma \ref{Weyl sums} and Parseval's identity $\sum\limits_{l}  \left|\widehat{f_m}(\x; \v_m; l)\right|^2 = \EE_{\x+V_m}|f_m|^2$, we deduce that 
\begin{align*}
    \|F\|_{U^1(\v_m)}^2 &\leq \EE_{\x}\left|{\widehat{f_m}(\x; \v_m; 0)}\right|^2 + O(p^{-1/2}) = \|f_m\|_{U^1(\v_m)}^2 + O(p^{-1/2}).
\end{align*}

We assume now that $t>1$. Applying the Cauchy-Schwarz inequality in $k$ to (\ref{average of F on a coset}) and performing several changes of variables, we bound 
\begin{align*}
    \left|\EE_{\x+V_m}F\right|^2\leq \EE_{n}\left|\EE_y\prod_{i=1}^{m-1} f_i(\x+ \v_i P_i(y)+\v_m n) \prod_{i=m+1}^t e_p(-\phi_i(\x +\v_m n) P_i(y))1_\U(\x+\v_m n)\right|^2,
\end{align*}
and so 
\begin{align*}
    \|F\|_{U^1(\v_m)}^2 \leq \EE_{\x} \left|\EE_y\prod_{i=1}^{m-1} f_i(\x+ \v_i P_i(y)) \prod_{i=m+1}^t e_p(-\phi_i(\x) P_i(y))1_\U(\x)\right|^2.
\end{align*}
Applying the $(m-1, t-1)$ case of Proposition \ref{U^1 control of G} to 
\begin{align*}
    G_{m-1, t-1}(\x) = \EE_y\prod_{i=1}^{m-1} f_i(\x+ \v_i P_i(y)) \prod_{i=m+1}^t e_p(-\phi_i(\x) P_i(y))1_\U(\x),
\end{align*}
 which is where we use $t>1$, we deduce that
\begin{align}\label{control of F by U^1 norms of component functions}
    \|F\|_{U^1(\v_m)} \leq \min_{1\leq i\leq m-1}\|f_i\|_{U^1}(\v_i) + O(p^{-c_1})
\end{align}
for some $c_1>0$.

It remains to show that $\|F\|_{U^1(\v_m)}\leq \|f_m\|_{U^1}(\v_m) + O(p^{-c_1})$. Once again, we look at $\|F\|_{U^1(\v_m)}$, splitting each $f_1, ..., f_{m-1}$ into $f_i = \EE(f_i|V_i) + (f_i-\EE(f_i|V_i))$. Using (\ref{control of F by U^1 norms of component functions}) and the fact $\EE(f_i|V_i)(\x + \v_i P_i(y)) = \EE(f_i|V_i)(\x)$, we get that
\begin{align*}
    \|F\|_{U^1(\v_m)}^2 = &\EE_{\x} \left|\EE_{n,y,k}\prod_{i=1}^{m-1} |\EE(f_i|V_i)(\x+ \v_m n)|^2 f_m(\x+\v_m(n+P_m(y))) \right.\\
    &\left.\prod_{i=m+1}^t e_p(\phi_i(\x + \v_m n) \partial_k P_i(y))1_\U(\x + \v_m n)\right|^2 + O(p^{-c_1})
\end{align*}
We let $g(\x) = \prod\limits_{i=1}^{m-1} |\EE(f_i|V_i)(\x)|^2 1_\U(\x)$, so that
\begin{align*}
    \|F\|_{U^1(\v_m)}^2 &= \EE_{\x} \left|\EE_{n,y}g(\x+\v_m n) f_m(\x+\v_m(n+P_m(y))) \prod_{i=m+1}^t e_p(-\phi_i(\x+\v_m n)P_i(y))\right.\\
    &\left.\EE_k \prod_{i=m+1}^t e_p(\phi_i(\x+\v_m n) P_i(y+k))\right|^2.
\end{align*}
Using Lemma \ref{Weyl sums} and the linear independence of $P_{m+1}, ..., P_t$, the expectation in $k$ is of size $O(p^{-1/2})$ unless $\phi_{m+1}(\x+\v_m n) = ... =\phi_t(\x+\v_m n)=0$, and so
\begin{align*}
    \|F\|_{U^1(\v_m)}^2 &= \EE_{\x} |\EE_{n,y}g(\x+\v_m n) f_m(\x+\v_m(n+P_m(y)))|^2 + O(p^{-c})
\end{align*}
for $c = \min(c_1, 1/2)$.
To get rid of $g$, we apply the Cauchy-Schwarz inequality in $n$ to the inner expectation and obtain 
\begin{align*}
    \|F\|_{U^1(\v_m)}^2 &\leq \EE_{\substack{\x, y,\\ n, k}} \overline{f_m(\x+\v_m (n+P_m(y)))}{f_m(\x+\v_m (n+P_m(y+k)))} + O(p^{-c})
\end{align*}
We conclude the proof exactly the same way as in the $m=t=1$ case.

\end{proof}

\section{Estimating the number of progressions from below}\label{section on bounds}

With all the results from Section \ref{section on computations}, we are finally able to prove Theorem \ref{count}.
\begin{proof}[Proof of Theorem \ref{count}]
Let $f_0, ..., f_t: \FF_p^D\to\CC$ be 1-bounded, $P_1, ..., P_t\in\ZZ[y]$ be polynomials satisfying $0 < \deg P_1 < ... < P_t$, and $\v_1, ..., \v_t\in\ZZ^D$ be nonzero vectors. By Proposition \ref{U^1 control of G}, we have 
\begin{align*}
    \left|\EE_{\x, y}\prod_{i=0}^t f_i(\x+ \v_i P_i(y))\right|\leq\left(\EE_\x\left|\EE_{y}\prod_{i=0}^t f_i(\x+ \v_i P_i(y))\right|^2\right)^\frac{1}{2}\leq\min_{1\leq i\leq t}\|f_i\|_{U^1(\v_i)} + O(p^{-c})
\end{align*}
for a constant $c>0$ independent of $f_0, ..., f_t$. The statement follows by splitting each $f_1, ..., f_t$ as $f_i = \EE(f_i|V_i) + (f_i-\EE(f_i|V_i))$ and recalling that $\|f_i-\EE(f_i|V_i)\|_{U^1(\v_i)} = 0$ as well as $\EE(f_i|V_i)(\x + \v_i P_i(y)) = \EE(f_i|V_i)(\x)$.
\end{proof}

Corollary \ref{count for nonnegative functions} follows from the following lemma, which is a special case of Lemma 1.6 of \cite{chu_2011}.

\begin{lemma}\label{bounding the count from below}
Let $\v_1, ..., \v_t\in\ZZ^D$ be nonzero vectors and $f:\FF_p^D\to\CC$ be nonnegative. Then 
\begin{align*}
    \EE_{\x}\prod_{i=0}^t \EE(f|V_i)(\x) \geq \left(\EE_{\x\in\FF_p^D} f(\x) \right)^{t+1}.
\end{align*}
\end{lemma}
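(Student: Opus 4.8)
The plan is to prove Lemma~\ref{bounding the count from below} by induction on $t$, peeling off one factor $\EE(f|V_i)$ at a time using the Cauchy--Schwarz inequality. The base case $t=0$ (or the trivial observation that $\EE(f|V_0)(\x) = \EE(f|\{\textbf{0}\})(\x) = f(\x)$, so the $t=0$ claim reads $\EE_\x f(\x) \geq \EE_\x f(\x)$) is immediate; one should be a little careful about what $\v_0 = \textbf{0}$ means here, but since $\EE(f|V_0) = f$ the statement for general $t$ really says $\EE_\x f(\x)\prod_{i=1}^t \EE(f|V_i)(\x) \geq (\EE_\x f)^{t+1}$.

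First I would set $g = \prod_{i=1}^{t} \EE(f|V_i)^{1/2}$ type decomposition is not quite right because of the leftover $f$; instead I would argue as follows. Write $I = \EE_\x f(\x)\prod_{i=1}^t \EE(f|V_i)(\x)$. Apply the Cauchy--Schwarz inequality in the $\v_t$-direction: since $\EE(f|V_t)(\x)$ depends only on the coset $\x + V_t$, we have $I = \EE_{\x+V_t}\left[\EE(f|V_t)(\x) \cdot \EE_{\x + V_t}\left(f(\x)\prod_{i=1}^{t-1}\EE(f|V_i)(\x)\right)\right]$ after averaging the remaining factors over the coset. Then apply Cauchy--Schwarz, or rather the trivial bound $\EE(f|V_t)(\x) = \EE_{\x+V_t} f$ together with a clever rearrangement. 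Actually the cleanest route is Chu's original argument: by the Cauchy--Schwarz inequality (or Jensen, using $f\geq 0$),
\begin{align*}
    \EE_\x f(\x)\prod_{i=1}^t \EE(f|V_i)(\x) \geq \frac{\left(\EE_\x f(\x)\prod_{i=1}^{t-1}\EE(f|V_i)(\x)\right)^2}{\EE_\x f(\x)\prod_{i=1}^{t-1}\EE(f|V_i)(\x) / \EE(f|V_t)(\x)}
\end{align*}
is not well-formed; the honest approach is to iterate a single two-term Cauchy--Schwarz. Concretely, I would prove the auxiliary inequality
\begin{align*}
    \EE_\x h(\x)\EE(f|V)(\x) \geq \left(\EE_\x h(\x)\right)\cdot\frac{\EE_\x h(\x) f(\x)}{\EE_\x h(\x)}
\end{align*}
— again not quite — so let me state the real plan: reduce to the known Lemma~1.6 of \cite{chu_2011} essentially verbatim, whose proof goes by induction where at each step one uses that for nonnegative $h$ and any subspace $V$, $\EE_\x h(\x) \EE(f|V)(\x) = \EE_\x \EE(h|V)(\x) f(\x) \geq$ (by Cauchy--Schwarz applied coset-by-coset to pull $\EE(h|V)$ against $f$, after an induction hypothesis giving a lower bound for $\EE(h|V)$).

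The key steps in order: (1) observe the self-adjointness identity $\EE_\x h(\x)\,\EE(f|V)(\x) = \EE_\x \EE(h|V)(\x)\, f(\x)$, valid since conditioning on cosets of $V$ is an orthogonal projection; (2) set up the induction on $t$ with hypothesis that $\EE_\x \prod_{i=0}^{t-1}\EE(f|V_i)(\x) \geq (\EE_\x f)^t$, and more precisely a pointwise-flavored statement that lets one iterate; (3) at the induction step, use step (1) to move the projection $\EE(\cdot|V_t)$ onto the product $\prod_{i=0}^{t-1}\EE(f|V_i)$, then apply the Cauchy--Schwarz inequality on each coset of $V_t$ in the form $\EE_{\x+V_t}\left(\prod_{i=0}^{t-1}\EE(f|V_i)\right) \cdot \EE_{\x+V_t} f \geq \left(\EE_{\x+V_t}\left(\prod_{i=0}^{t-1}\EE(f|V_i)\right)^{1/2} f^{1/2}\right)^2$ — or more simply use that $\EE(f|V_t)(\x) \geq 0$ and Jensen to compare $\EE_\x (\prod \EE(f|V_i)) \EE(f|V_t)$ with $(\EE_\x \prod \EE(f|V_i))^{?}$; (4) combine with the induction hypothesis and Hölder/power-mean to recover the exponent $t+1$. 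The main obstacle is bookkeeping the exponents correctly so that the induction closes with exactly power $t+1$ rather than something weaker: the naive single Cauchy--Schwarz loses too much, so one must instead use the sharper inequality that for nonnegative $a_1,\dots,a_n$ and a probability measure, $\EE \prod a_i \geq \prod (\EE a_i)$ fails in general — so the actual mechanism must exploit that each $\EE(f|V_i)$ is itself an average of the \emph{same} $f$, via the tensor-power trick: consider $f^{\otimes(t+1)}$ on $(\FF_p^D)^{t+1}$ and a suitable averaging, reducing everything to $\EE_\x\prod \EE(f|V_i)(\x) = \EE\left[f(\x_0)\cdots f(\x_t) : \x_i \equiv \x \pmod{V_i^\perp}\text{-type condition}\right]$, then apply Cauchy--Schwarz–Gowers / the box norm positivity. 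I expect the cleanest writeup simply cites \cite{chu_2011} for the induction and reproduces its two-line inductive step: assuming the bound for $t-1$ applied to the nonnegative function $f$, write $\EE_\x\prod_{i=0}^t\EE(f|V_i)(\x) = \EE_\x \EE(f|V_t)(\x)\prod_{i=0}^{t-1}\EE(f|V_i)(\x)$, apply Cauchy--Schwarz in the coset direction of $V_t$ to get $\geq \left(\EE_\x \EE(f|V_t)(\x)^{1/2}\left(\prod_{i=0}^{t-1}\EE(f|V_i)(\x)\right)^{1/2}\right)^2$... and here is where I must be careful, because this does not obviously beat the induction hypothesis; the correct move, which I would verify, is instead to keep one factor of $f$ unprojected throughout, i.e. prove by induction the statement $\EE_\x f(\x)\prod_{i=1}^t\EE(f|V_i)(\x)\geq(\EE_\x f)^{t+1}$ directly, peeling off $\EE(f|V_t)$ by Cauchy--Schwarz against the ``square root'' of the rest and using that $f(\x)^{1/2}\cdot(\text{stuff})^{1/2}$ re-assembles after squaring — the technical heart being the inequality $\left(\EE_\x f(\x) g(\x)\right)^2 \leq \EE_\x f(\x)\,\EE_\x f(\x) g(\x)^2$ for $f\geq 0$, applied with $g = \prod \EE(f|V_i)^{1/2}$-weighted appropriately. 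I will lay this out carefully, expecting the exponent-tracking to be the only real subtlety.
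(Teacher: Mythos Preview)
The paper does not prove this lemma at all: it states the result, remarks that it is a special case of Lemma~1.6 of \cite{chu_2011}, and immediately moves on to the proof of Theorem~\ref{bounds}. So your eventual fallback---``the cleanest writeup simply cites \cite{chu_2011}''---is precisely what the paper does.

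As a self-contained argument, however, your proposal never coheres. It is a sequence of false starts, most of which you yourself flag as not working (``not quite right'', ``again not quite'', ``this does not obviously beat the induction hypothesis'', ``the naive single Cauchy--Schwarz loses too much''). Every concrete Cauchy--Schwarz step you write down either points the wrong way---producing an upper bound where a lower bound is needed---or yields a new quantity that the inductive hypothesis does not control, and you never identify the mechanism that makes the exponent close at exactly $t+1$. The ingredients you list (self-adjointness of $\EE(\cdot|V)$, induction on $t$, the box-norm/tensor-power viewpoint) are all relevant to a correct proof, but nowhere in the proposal is there a complete chain from hypothesis to conclusion. If you want more than a citation, you must actually execute Chu's induction rather than gesture at it; otherwise, matching the paper and citing \cite{chu_2011} is the right call.
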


\begin{proof}[Proof of Theorem \ref{bounds}]
Suppose that $A\subseteq\FF_p^D$ has size $|A| = \alpha p^D$. Theorem \ref{bounds} and Lemma \ref{bounding the count from below} imply that there exists a constant $c>0$ for which $A$ contains $\Omega(\alpha^{t+1}p^D) - O(p^{D-c})$ nontrivial configurations (\ref{configuration}). It follows that if $\alpha\gg p^{-c/(t+1)}$, then $A$ contains a nontrivial configuration (\ref{configuration}).

\end{proof}

\bibliography{library}
\bibliographystyle{alpha}

\end{document}